\documentclass[11pt]{amsart}
\usepackage{amssymb}
\usepackage{color}
\usepackage{epsfig}
\usepackage{url}
\usepackage{setspace}
\usepackage{pdflscape}
\usepackage{etoolbox}
\theoremstyle{plain}

\newtheorem{thm}{Theorem}[section]

\newtheorem{lem}[thm]{Lemma}

\newtheorem{rem}[thm]{Remark}
\newtheorem{ques}[thm]{Question}
\newtheorem{conj}[thm]{Conjecture}

\def\cal{\mathcal}
\def\bbb{\mathbb}
\def\op{\operatorname}
\renewcommand{\phi}{\varphi}

\newcommand{\N}{\bbb{N}}
\newcommand{\Z}{\bbb{Z}}
\newcommand{\Q}{\bbb{Q}}

\setlength{\oddsidemargin}{0.5cm}  
\setlength{\evensidemargin}{0.5cm} 
\setlength{\topmargin}{-2.0cm}
\setlength{\textheight}{25cm}
\setlength{\textwidth}{15cm}
 \begin{document}
%
%
%
%
%
%
%
%
%

\title[Power values of sums of certain products of consecutive integers]{Power values of sums of certain products of consecutive integers and related results} \author{Szabolcs Tengely, Maciej Ulas}

\keywords{power values, product, consecutive integers, high degree Diophantine equations} \subjclass[2010]{11D41}

\begin{abstract} Let $n$ be a non-negative integer and put $p_{n}(x)=\prod_{i=0}^{n}(x+i)$. In the first part of the paper, for given $n$, we study the existence of integer solutions of the Diophantine equation
$$
y^m=p_{n}(x)+\sum_{i=1}^{k}p_{a_{i}}(x),
$$
where $m\in\N_{\geq 2}$ and $a_{1}<a_{2}<\ldots <a_{k}<n$. This equation can be considered as a generalization of the Erd\H{o}s-Selfridge Diophantine equation $y^m=p_{n}(x)$. We present some general finiteness results concerning the integer solutions of the above equation. In particular, if $n\geq 2$ with $a_{1}\geq 2$, then our equation has only finitely many solutions in integers. In the second part of the paper we study the equation
$$
y^m=\sum_{i=1}^{k}p_{a_{i}}(x_{i}),
$$
for $m=2, 3$, which can be seen as an additive version of the equation considered by Erd\H{o}s and Graham. In particular, we prove that if $m=2, a_{1}=1$ or $m=3, a_{2}=2$, then for each $k-1$ tuple of positive integers $(a_{2},\ldots, a_{k})$ there are infinitely many solutions in integers.

\end{abstract}

\maketitle

\section{Introduction}\label{sec1}

Let $\N$ denote the set of positive integers, $\N_{0}$ the set of non-negative integers and $\N_{\geq k}$ will denote the set of non-negative integers $\geq k$. For $n\in\N_{0}$ we write
$$
p_{a}(x)=\prod_{i=0}^{a}(x+j).
$$
Moreover, we define the set
$$
A_{n}=\{(a_{1},\ldots,a_{k})\in\N_0^{k}:\;a_{i}<a_{i+1}\;\mbox{for}\;i=1,2,\ldots k-1,\;a_{k}<n\;\mbox{and}\; k\in\{1,\ldots,n-1\}\}.
$$
For given $m\in\N_{\geq 2}$ and $T=(a_{1},\ldots,a_{k})\in A_{n}$ we consider the Diophantine equation
\begin{equation}\label{maineq}
y^m=g_{T}(x), \quad\mbox{where}\quad g_{T}(x):=p_{n}(x)+\sum_{i=1}^{k}p_{a_{i}}(x).
\end{equation}
The cardinality of $A_{n}$ is $2^{n}-1,$ hence for a given $m$ we deal with $2^{n}-1$ Diophantine equations.


Let us observe that equation (\ref{maineq}) can be seen as a generalization of the classical Diophantine equation
\begin{equation}\label{erse}
y^{m}=p_{n}(x).
\end{equation}
The literature of this type of Diophantine equations is very extensive. Erd\H{o}s \cite{er} and Rigge \cite{r} independently proved that a product of two or more consecutive integers is never a perfect square. Erd\H{o}s and Selfridge \cite{ErSel} proved that the above equation has no solutions in integers $(x,y,m,n)$ satisfying the conditions $n\geq 1, m\geq 2$ and $y\neq 0$.
A difficult conjecture states that even a product of consecutive terms of an arithmetic progression of length at least four and difference at least one is never a perfect power. Euler proved (see \cite{Dickson} pp. 440 and 635) that a product of four terms in arithmetic progression is never a square. Obl\'ath \cite{Oblath} obtained a similar statement in case of five terms. Bennett, Bruin, Gy\H{o}ry and Hajdu \cite{BBGyH} extended this result to the case of arithmetic progressions having at most 11 terms. Gy\H{o}ry, Hajdu and Pint\'{e}r \cite{GHP} extended these results for at most 34 terms
Hirata-Kohno, Laishram, Shorey and Tijdeman \cite{hklst} completely solved the Diophantine equations related to square products of arithmetic progressions of length $3\leq k<110.$ Finally, Bennett and Siksek in a recent paper \cite{BS} shoved that if $k$ is large enough, the equation in question has only finitely many solutions.

One can also note that equation (\ref{maineq}), in the case $T=(0,1,\ldots,n-1)\in A_{n}$, was studied in a recent paper of Hajdu, Laishram and the first author \cite{HLT}. They proved that for $n\geq 1$ and $m\geq 2$ (with $n\neq 2$ in case of $m=2$) equation (\ref{maineq}) has only finitely many integer solutions. Moreover, they were also able to solve the equation explicitly for $n\leq 10$.

Equation (\ref{maineq}) can be considered as a generalization of the Erd\H{o}s and Selfridge equation (\ref{erse}).
Moreover, the problem of Erd\H{o}s and Graham \cite{ErGr} asking for the integer solution of the equation is given by
\begin{equation}\label{ergr}
y^m=P_{T}(x_{1},\ldots,x_{n}),
\end{equation}
where $T\in B_{n}, P_{T}(x_{1},\ldots,x_{n})=p_{a_{1}}(x_{1})\cdot\ldots\cdot p_{a_{n}}(x_{n})$ and
$$
B_{n}=\{(a_{1},\ldots,a_{n})\in\N^{n}:\;a_{i}\leq a_{i+1}\;\mbox{for}\;i=1,2,\ldots n-1\}.
$$
Here we impose the natural condition for solutions: $x_{i}+a_{i}<x_{i+1}$ for $i=1,\ldots,k-1$. This equation can be seen as a multi-variable (and multiplicative) analogue of equation (\ref{erse}). In the literature there are many nice results dealing with special cases of the problem of Erd\H{o}s and Graham and its various generalizations. In order to get more information on this problem one can consult the papers \cite{BaBe, BvL, LuWa, Skalba, Ulas2005}. Let us observe that if $k\geq 2$ then the above equation defines an algebraic variety of degree $\op{max}\{2,a_{1}+\ldots+a_{k}\}$ and dimension $k$. Based on existing results one can expect that if the number $k-\op{max}\{a_{1},\ldots,a_{k}\}$ is sufficiently large, then equation (\ref{ergr}) has infinitely many solutions in integers (however, this is in general an unproven conjecture).

Motivated by research devoted to the study of equation (\ref{ergr}) it is quite natural to consider a multi-variable and  additive version of (\ref{erse}) as well. More precisely, we are interested in the problem of existence of integer solution of the Diophantine equation
\begin{equation}\label{nequation}
z^m=G_{T}(x_{1},\ldots,x_{n}),
\end{equation}
where for a given $T=(a_{1},\ldots,a_{n})\in\ B_{n}$ we put
$$
G_{T}(x_{1},\ldots,x_{n})=\sum_{i=1}^{n}p_{a_{i}}(x_{i}).
$$
In some sense this equation is a bit simpler then equation (\ref{ergr}) because the degree of the underlying algebraic variety is governed by the number $a_{n}$. Indeed, in the sequel we will see that we can prove the existence of infinitely many integer solutions of equation (\ref{nequation}) for some values of $T\in B_{n}$ such that the question corresponding the existence of integer solutions of equation (\ref{ergr}) is open.

Let us describe the content of the paper in some details.

In Section \ref{sec2} we present some general finiteness results concerning the integer solutions of the Diophantine equation (\ref{maineq}). In particular, if $n\geq 2, T\in A_{n}$ with $a_{1}\geq 2$, then our equation has only finitely many solutions in integers.

In Section \ref{sec3} we consider equation (\ref{maineq}) with $m=2$ and $T\in A_{n}, n\leq 5$. With our assumption the genus of the corresponding curve $C_{T}:\;y^2=g_{T}(x)$ is bounded by 2. We briefly explain the available tools to determine integral and rational points on these curves and we provide details in a supplementary file that can be downloaded from \cite{shrek}. 

In Section \ref{sec4} we discuss the possible application of Runge's method and provide details in a case where we applied a kind of reduction of bound procedure. In the earlier mentioned supplementary file we provide results in case of $m=2$ and $n=5, 7, 9, 11, 13, 15,$ here we give the full description of the set of integer solutions of (\ref{maineq}) for each $T\in A_{n}.$ Let us note that the number of equations we solved by Runge's method is more then 40000. Moreover, we also apply Runge's method in case of equation (\ref{maineq}) with $(m,n)\in\{(8,3),(11,3),(4,5),(9,5),(6,7)\}$ and each $T\in A_{n}.$


Finally, Section \ref{sec5} is devoted to a multi-variable generalization of equation (\ref{maineq}). In particular, if $n\in\N_{\geq 2}$ we prove that equation (\ref{nequation}) has infinitely many integer solutions in certain cases.

Through the paper we also formulate several questions and conjectures concerning various Diophantine equations which may stimulate further research.

\section{The equation $y^m=g_{T}(x)$ for $T\in A_{n}$, with $n\geq 3$}\label{sec2}

In this section we consider equation (\ref{maineq}) for $T\in A_{n}, n\geq 3$ with some additional constraints on the shape of the sequence $T$. However, before we state our findings we will recall some general results concerning the solvability in integers $x, y, m$ of Diophantine equations of the form
\begin{equation}\label{genres}
y^{m}=g(x),
\end{equation}
where $g\in\Z[x]$ is fixed of degree $d$ and height $H$, where by height of the polynomial $g$ we understand the maximum of the modulus of the coefficients.

The following lemma, due to Tijdeman \cite{Tij}, will be one of our main tools.
\begin{lem}\label{L1}
If $g(x)$ has at least two distinct roots and $|y|>1$, then in the Diophantine equation {\rm (\ref{genres})} we have $m<c_{1}(d,H)$, where $c_{1}(d,H)$ is an effectively computable constant depending only on $d$ and $H$.
\end{lem}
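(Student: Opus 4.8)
The plan is to establish this classical fact by the Baker-method treatment of superelliptic equations, in the spirit of Schinzel and Tijdeman: one bounds the exponent $m$ directly by playing the size of a high power against the \emph{logarithmic} dependence on the exponent in the theory of linear forms in logarithms. Since the height $H$ enters only through the final effective constant, I would pass to the splitting field $K$ of $g$ over $\Q$ and factor $g(x)=a\prod_{i=1}^{s}(x-\alpha_{i})^{r_{i}}$, where the $\alpha_{i}\in K$ are the distinct roots and $s\geq 2$ by hypothesis. All data attached to $K$ — its degree and discriminant, the class number $h_{K}$, and a fundamental system of $S$-units for the finite set $S$ of places lying above $\infty$, above $a$, and above the nonzero differences $\alpha_{i}-\alpha_{j}$ — are effectively bounded in terms of $d$ and $H$, so it suffices to bound $m$ in terms of these quantities. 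I may also assume $m>3d$, the range $m\leq 3d$ being already a bound.

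First I would exploit the near-coprimality of the shifted factors. For a prime $\mathfrak{p}$ of $K$ outside $S$ at most one of the $x-\alpha_{i}$ is divisible by $\mathfrak{p}$, since $\mathfrak{p}\nmid(\alpha_{i}-\alpha_{j})$; comparing valuations in $y^{m}=a\prod_{i}(x-\alpha_{i})^{r_{i}}$ gives $r_{i}\,v_{\mathfrak{p}}(x-\alpha_{i})\equiv 0\pmod m$, hence $v_{\mathfrak{p}}(x-\alpha_{i})\equiv 0\pmod{t_{i}}$ with $t_{i}:=m/\gcd(m,r_{i})$. Absorbing the $S$-part and the ideal-class obstruction into a bounded factor (this is where $h_{K}$ and the $S$-unit regulator enter), I obtain representations $x-\alpha_{i}=\beta_{i}\eta_{i}^{t_{i}}$ in which each $\beta_{i}$ ranges over an effectively listable finite set of bounded height and $\eta_{i}\in K^{*}$. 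Because $m>3d\geq 3r_{i}$, every $\gcd(m,r_{i})\leq r_{i}<m/3$, so every $t_{i}\geq 3$.

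The decisive step takes two distinct roots $\alpha_{1},\alpha_{2}$ of \emph{equal} multiplicity, hence equal exponent $t_{1}=t_{2}=:t$; such a pair exists whenever some root is irrational (Galois conjugates share their multiplicity) or two multiplicities coincide. Subtracting the two representations yields $\beta_{1}\eta_{1}^{t}-\beta_{2}\eta_{2}^{t}=\alpha_{2}-\alpha_{1}=:\kappa\neq 0$, and division shows that $\Lambda:=\beta_{1}\eta_{1}^{t}/(\beta_{2}\eta_{2}^{t})-1=\kappa/(\beta_{2}\eta_{2}^{t})$ satisfies $|\Lambda|\asymp|x|^{-1}$ for $|x|$ large (small $|x|$ being harmless). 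On the other hand $\log(1+\Lambda)$ equals the linear form $L=t\log(\eta_{1}/\eta_{2})+\log(\beta_{1}/\beta_{2})$, whose \emph{only} large-height entry is the single number $\eta_{1}/\eta_{2}$. The size estimate $|x-\alpha_{i}|=|\beta_{i}||\eta_{i}|^{t}\asymp|x|$ forces $h(\eta_{1}/\eta_{2})\asymp t^{-1}\log|x|$, so Baker's effective lower bound reads $\log|L|\geq -C\,h(\eta_{1}/\eta_{2})\log t\asymp -C\,t^{-1}(\log|x|)\log t$, while the upper estimate gives $\log|L|\asymp-\log|x|$. The common factor $\log|x|$ cancels, leaving $1\leq C\,t^{-1}\log t$, i.e. $t\leq C_{0}$, and then $m=t\,\gcd(m,r_{1})\leq t\cdot d\leq C_{0}d$.

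The heart of the matter — and the step I expect to be the main obstacle — is exactly this cancellation: it succeeds only because Baker's bound depends on the exponent $t$ merely through $\log t$, whereas the smallness of $L$ is linear in $t$ (one surviving factor $t^{-1}\log|x|$, because the two powers combine into the single algebraic number $\eta_{1}/\eta_{2}$). Making this rigorous requires keeping every auxiliary quantity — the admissible $\beta_{i}$, the height of $\eta_{1}/\eta_{2}$, and the dependence on $h_{K}$ and the $S$-unit regulator — effective and uniform in $m$. One configuration still needs separate treatment, namely when all roots are rational with pairwise distinct multiplicities, so that no two exponents $t_{i}$ coincide; there one must run a two-term argument with $t_{1}\neq t_{2}$ (a Thue--Mahler, effective Fermat--Catalan type estimate), which is more technical but again governed by the same logarithmic dependence on the exponents. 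Granting the effective theory of linear forms in logarithms, the remainder is routine bookkeeping and case analysis.
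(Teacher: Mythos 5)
First, a point of calibration: the paper does not prove this lemma at all --- it is stated as a known result of Tijdeman \cite{Tij} (it is the Schinzel--Tijdeman theorem), so there is no internal proof to measure your attempt against. That said, your sketch does follow the strategy of the actual proof of that theorem: pass to the splitting field, extract from $y^m=g(x)$ relations $x-\alpha_i=\beta_i\eta_i^{t_i}$ with $t_i=m/\gcd(m,r_i)\ge m/d$, eliminate $x$ between two distinct roots, and play Baker's lower bound (logarithmic in the exponent) against the upper bound $|\Lambda|\ll |x|^{-1}$. The cancellation you single out --- one factor $h(\eta_1/\eta_2)\ll t^{-1}\log|x|$ against a single $\log|x|$, leaving $t\ll\log t$ --- is indeed the heart of the matter, and it is correct as far as it goes \emph{provided} the linear form contains only one number of large height.

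That proviso is where the genuine gaps are. (i) The step ``absorbing the $S$-part and the ideal-class obstruction into a bounded factor'' is precisely where uniformity in $m$ can be lost: the naive reduction of the $S$-exponents and of the unit part modulo $t_i$ only yields $h(\beta_i)\ll t_i$, and a number of height $\asymp t$ inside the linear form turns your final inequality into the vacuous $1\ll\log t$. One must choose the unit part of $\eta_i$ so that its archimedean absolute values are balanced ($\asymp|x|^{1/t_i}$ at every infinite place); that is what actually forces $h(\beta_i)=O(1)$ independently of $m$, and it is the real content of the reduction rather than routine bookkeeping. (ii) The all-rational-roots, distinct-multiplicities case is not handled: deferring to ``an effective Fermat--Catalan type estimate'' with two varying exponents is invoking a theorem of essentially the same depth as the one being proved. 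In fact no new input is needed: since $\gcd(m,r_i)\le r_i\le d$, one has $\gcd(t_1,t_2)\ge m/\operatorname{lcm}(\gcd(m,r_1),\gcd(m,r_2))\ge m/d^2$, so writing $t_i=\gcd(t_1,t_2)\,u_i$ and grouping $\eta_1^{u_1}/\eta_2^{u_2}$ into a single number of height $\ll\log|x|/\gcd(t_1,t_2)$ returns you to the two-logarithm situation and gives $m\le d^2\gcd(t_1,t_2)\le d^2C_0$. Finally, the hypothesis $|y|>1$, which you never invoke, is indispensable in the endgame: both for bounded $|x|$ and in the Baker regime where one only obtains $|x|\le(em)^{C}$, it is the chain $2^m\le|y|^m=|g(x)|\le(d+1)H\max(1,|x|)^{d}$ that converts a polynomial bound on $|x|$ into a bound on $m$; without it the statement is false.
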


The next result is a special case of a theorem of Brindza \cite{Bri}.

\begin{lem}\label{L2}
Suppose that one of the following conditions holds:
\begin{enumerate}
\item $m\geq 3$ and $g(x)$ has at least two roots with multiplicities co-prime to $m$,
\item $m=2$ and $g(x)$ has at least three roots with odd multiplicities.
\end{enumerate}
Then all integer solutions of equation {\rm (\ref{genres})} satisfies $\op{max}\{|x|,|y|\}\leq c_{2}(d, H)$, where $c_{2}(d,H)$ is an effectively computable constant depending only on $d$ and $H$.
\end{lem}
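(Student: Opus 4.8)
The plan is to follow the classical finiteness criterion of LeVeque made effective through Baker's theory of linear forms in logarithms, which is exactly the circle of ideas behind Brindza's theorem. First I would pass to the splitting field: write $g(x)=a\prod_{i=1}^{r}(x-\alpha_{i})^{e_{i}}$ with $a\in\Z$, distinct algebraic integers $\alpha_{i}$ (absorbing the leading coefficient and denominators into $a$), and set $K=\Q(\alpha_{1},\ldots,\alpha_{r})$ with ring of integers $\mathcal{O}_{K}$. Let $S$ be the finite set of places of $K$ consisting of the archimedean ones together with those dividing $a$, the differences $\alpha_{i}-\alpha_{j}$, and $m$. Outside $S$ the principal ideals $(x-\alpha_{i})$ are pairwise coprime, so from $y^m=a\prod_i(x-\alpha_{i})^{e_{i}}$ a valuation count shows that, for every prime $\mathfrak{p}\notin S$, one has $v_{\mathfrak{p}}(x-\alpha_{i})\equiv 0 \pmod{t_{i}}$, where $t_{i}=m/\gcd(m,e_{i})$.

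Second I would extract this power structure globally. The congruence above gives $(x-\alpha_{i})=\mathfrak{d}_{i}^{t_{i}}\cdot(S\text{-ideal})$, and invoking the finiteness of the $S$-class number together with Dirichlet's $S$-unit theorem yields a representation
$$
x-\alpha_{i}=\eta_{i}\,\delta_{i}^{t_{i}},
$$
where $\eta_{i}$ runs over a finite, effectively computable set of representatives of $S$-units modulo $t_{i}$-th powers and $\delta_{i}\in\mathcal{O}_{K,S}$. The two hypotheses are precisely what guarantee enough large exponents: in case (1) every root with $\gcd(e_{i},m)=1$ has $t_{i}=m\geq 3$, and in case (2) every root of odd multiplicity has $t_{i}=2$.

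Third, in case (1) I would select two roots $\alpha_{1},\alpha_{2}$ with $t_{1}=t_{2}=m$ and subtract the two relations to obtain
$$
\eta_{1}\delta_{1}^{m}-\eta_{2}\delta_{2}^{m}=\alpha_{2}-\alpha_{1}\neq 0 .
$$
For each of the finitely many admissible pairs $(\eta_{1},\eta_{2})$ this is a Thue--Mahler equation of degree $m\geq 3$ over $\mathcal{O}_{K,S}$, whose solutions are bounded effectively by Baker's method. In case (2) I would pick three roots of odd multiplicity and write $x-\alpha_{i}=\beta_{i}^{2}$ with $\beta_{i}=\sqrt{\eta_{i}}\,\delta_{i}$ in a fixed quadratic extension; then each factor of $\beta_{i}^{2}-\beta_{j}^{2}=(\beta_{i}-\beta_{j})(\beta_{i}+\beta_{j})=\alpha_{j}-\alpha_{i}$ is an $S$-unit, and the Siegel identity relating the three differences $\beta_{i}-\beta_{j}$ produces a three-term $S$-unit equation, again effectively solved (Baker, Gy\H{o}ry). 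In either case an effective bound on the $\delta_{i}$ (respectively $\beta_{i}$) translates, via $y^m=g(x)$, into the asserted bound on $\op{max}\{|x|,|y|\}$ depending only on $d$ and $H$.

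The main obstacle is effectivity rather than mere finiteness: the genuinely hard input is Baker's lower bounds for linear forms in (ordinary and $p$-adic) logarithms, which is exactly what upgrades the resulting Thue--Mahler and $S$-unit equations from qualitatively finite to quantitatively bounded. The surrounding algebraic bookkeeping---enlarging to the splitting field, localising away from $S$, and controlling the $S$-class group and $S$-unit group so that the $\eta_{i}$ range over a finite effectively computable set---is routine but must be carried out so that every constant ultimately depends only on $d$ and $H$.
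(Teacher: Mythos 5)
Your proposal is correct: it is the standard LeVeque--Baker argument (factor over the splitting field, extract $t_i$-th powers $x-\alpha_i=\eta_i\delta_i^{t_i}$ with $t_i=m/\gcd(m,e_i)$ via the $S$-class group and $S$-unit theorem, then reduce case (1) to an effective Thue--Mahler equation and case (2) to a three-term $S$-unit equation through the Siegel identity), and the two hypotheses are exactly what rule out the exceptional exponent patterns. The paper itself gives no proof of this lemma---it is quoted verbatim as a special case of Brindza's theorem \cite{Bri}---and your sketch is precisely the proof behind that cited result, so there is nothing to reconcile beyond the routine but necessary bookkeeping you already flag, namely that the degree of the splitting field, the heights of the $\alpha_i$, the set $S$, and the class-number and regulator bounds are all controlled effectively in terms of $d$ and $H$ alone.
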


In order to apply the above results to our Diophantine equation (\ref{maineq}) we collect basic properties of the sequence of polynomials $(g_{T})_{T\in A_{n}}$ in the following. We note that in \cite{BBHL} the authors provided effective finiteness result for the equation $g_T(x)=ay^m+b$ in case of $T=(0,1,\ldots,n-1).$

\begin{lem}\label{firstprop}
Let $n\in\N_{\geq 2}, T=(a_{1},\ldots,a_{k})\in A_{n}$ and $a_{1}\geq 1$.
\begin{enumerate}
\item We have $g_{T}(x)=p_{a_{1}}(x)h_{T}(x)$, where $h_{T}\in\Z[x]$ and $\op{deg}h_{T}=n-(a_{1}+1)$.
\item The roots $x=-i, i=0,\ldots,a_{1}$ of the polynomial $g_{T}$ are simple. In particular, the polynomial $g_{T}(x)$ has at least two roots with odd multiplicity.
\item If $n\geq 5$ and $a_{1}=1, a_{2}=3, a_{3}\geq 5$, then the polynomial $h_{T}(x)$ is not a square of a polynomial with integer coefficients. In particular, the polynomial $g_{T}(x)$ has at least three roots with odd multiplicity.
\item The equation $g_{T}(x)=\pm 1$ has no solutions in integers.
\end{enumerate}
\end{lem}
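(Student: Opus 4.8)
The plan is to treat the four parts in turn, all resting on the single observation that $p_{a_1}$ divides every summand of $g_T$. For part (1) I would note that since $a_1\le a_i$ for every $i$ and $a_1<n$, the product $p_{a_1}(x)=x(x+1)\cdots(x+a_1)$ is built from the factors $(x+0),\ldots,(x+a_1)$ that also occur in each $p_{a_i}$ and in $p_n$; hence $p_{a_1}\mid g_T$, and as $p_{a_1}$ is monic the quotient $h_T:=g_T/p_{a_1}$ has integer coefficients (and is itself monic). Comparing leading terms shows $\deg g_T=n+1$, since only $p_n$ supplies the monomial $x^{n+1}$ while every other summand has degree $a_i+1\le n$; subtracting $\deg p_{a_1}=a_1+1$ then yields the asserted degree of $h_T$.

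For part (2) the roots in question are exactly $0,-1,\ldots,-a_1$, the roots of $p_{a_1}$, and simplicity of $-i$ in $g_T$ is equivalent to $g_T'(-i)\ne 0$. The key computation is the derivative identity
\[
p_b'(-i)=\prod_{\substack{0\le s\le b\\ s\ne i}}(s-i)=(-1)^i\,i!\,(b-i)!\qquad(0\le i\le b),
\]
which, summed over the constituents of $g_T$ and using $i\le a_1\le a_j<n$, gives
\[
g_T'(-i)=(-1)^i\,i!\Big((n-i)!+\sum_{j=1}^{k}(a_j-i)!\Big).
\]
The bracketed quantity is a sum of positive factorials, hence nonzero, so each $-i$ is a simple root. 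As $a_1\ge 1$, the values $i=0$ and $i=1$ already furnish two simple (hence odd-multiplicity) roots, proving the final assertion of (2).

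Part (3) is where the only genuine idea is needed: I would exhibit an integer at which $h_T$ takes a non-square value, and $x=-4$ works. Because $n\ge 5$ and $a_j\ge a_3\ge 5$ for $j\ge 3$, the factor $(x+4)$ occurs in $p_n$ and in every $p_{a_j}$ with $j\ge 3$, so all of these vanish at $-4$; only the contributions of $a_1=1$ and $a_2=3$ survive. Explicitly $g_T(-4)=p_1(-4)+p_3(-4)=12+24=36$ while $p_{a_1}(-4)=p_1(-4)=12$, so $h_T(-4)=3$. If $h_T=q^2$ with $q\in\Z[x]$, then $h_T(-4)=q(-4)^2$ would be a perfect square, contradicting $h_T(-4)=3$; thus $h_T$ is not a square in $\Z[x]$. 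For the final assertion, since $h_T$ is monic and not a square it has a root $r$ of odd multiplicity, and by (2) we have $h_T(0),h_T(-1)\ne 0$, so $r\notin\{0,-1\}$; as $p_{a_1}$ (with $a_1=1$) vanishes only at $0$ and $-1$, the multiplicity of $r$ in $g_T$ equals that in $h_T$, producing a third odd-multiplicity root of $g_T$ alongside $0$ and $-1$.

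For part (4) I would again use the factorization: for integer $x$, $g_T(x)=p_{a_1}(x)h_T(x)$ is a product of integers, so $g_T(x)=\pm 1$ forces $p_{a_1}(x)=\pm 1$. But $p_{a_1}(x)$ is a product of $a_1+1\ge 2$ consecutive integers: if one of them is $0$ the product is $0$, and otherwise all factors have absolute value $\ge 1$ with at least one of absolute value $\ge 2$, so $|p_{a_1}(x)|\ge 2$. This contradiction rules out $g_T(x)=\pm 1$. The main obstacle is thus isolated in part (3) — finding an evaluation point at which almost all summands vanish — while the derivative identity carries the weight of (2), and parts (1) and (4) are routine.
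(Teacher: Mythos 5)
Your proof is correct, but it takes a genuinely different route from the paper's in the two non-routine parts. The paper derives everything from one structural identity: using $p_{a+b}(x)=p_b(x)p_{a-1}(x+b+1)$ it writes $g_T(x)=p_{a_1}(x)\bigl(1+g_{T'}(x+a_1+1)\bigr)$ with $T'=(a_2-a_1-1,\ldots,a_k-a_1-1,n-a_1-1)$, so that $h_T=1+g_{T'}(x+a_1+1)$; part (2) then follows because $x_0+a_1+1\in\{1,\ldots,a_1+1\}$ is positive for $x_0\in\{0,\ldots,-a_1\}$, whence $h_T(x_0)=1+g_{T'}(x_0+a_1+1)>0$, and part (3) follows from the congruences $h_T(0)\equiv h_T(-1)\equiv 3\pmod 4$. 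You instead prove (2) by the exact derivative formula $g_T'(-i)=(-1)^i\,i!\bigl((n-i)!+\sum_j(a_j-i)!\bigr)\ne 0$, and (3) by the single evaluation $h_T(-4)=g_T(-4)/p_1(-4)=36/12=3$, which is not a perfect square. Both arguments are sound (your use of the hypotheses $a_1=1$, $a_2=3$, $a_3\ge 5$, $n\ge 5$ to kill all summands except $p_1$ and $p_3$ at $x=-4$ is exactly right, and the inference from ``monic, integer coefficients, not a square in $\Z[x]$'' to ``has a root of odd multiplicity'' is the same one the paper uses). What each buys: the paper's recursive identity is more structural and immediately reusable (it is what makes the generalization to $P_{p,q,n}$ in the subsequent remark transparent), while your evaluation at $x=-4$ for part (3) is arguably cleaner than the paper's two mod-$4$ computations, and your derivative formula for (2) gives slightly more information (the exact value of $g_T'$ at each root) at the cost of a computation the paper avoids entirely. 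Parts (1) and (4) are essentially identical to the paper's.
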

\begin{proof} We have
$$
g_{T}(x)=p_{n}(x)+\sum_{i=1}^{k}p_{a_{i}}(x),
$$
where $1\leq a_{1}<a_{2}<\ldots<a_{k}<n$. In particular, $p_{a_{1}}(x)|p_{a_{i}}(x)$ for $i=1,\ldots, k$ and obviously $p_{a_{1}}(x)|p_{n}(x)$. We also have the general identity
$$
p_{a+b}(x)=p_{b}(x)p_{a-1}(x+b+1).
$$
Consequently, we obtain the following relation
$$
g_{T}(x)=p_{a_{1}}(x)(1+g_{T'}(x+a_{1}+1)),
$$
where $T'=(a_{2}-a_{1}-1,a_{3}-a_{1}-1,\ldots, a_{k}-a_{1}-1,n-a_{1}-1)$.
We thus have $h_{T}(x)=1+g_{T'}(x+a_{1}+1)$. Now, let us observe that for $x_{0}=0,\ldots,-a_{1}$ and any $a>a_{1}$ we have $p_{a}(x_{0}+a_{1}+1)> 0$. This implies that $h_{T}(x_{0})=1+g_{T'}(x_{0}+a_{1}+1)>0$ and thus the roots $0, -1, \ldots, -a_{1}$, of the polynomial $g_{T}(x)$ are all simple.
Consequently, under our assumptions on $n$ and $T$, the polynomial $g_{T}(x)$ has at least two roots with multiplicity equal to one.

If $a_{1}=1, a_{2}=3$ and $a_{3}\geq 5$ then we get that
\begin{align*}
h_{T}(0) &=\lim_{x\rightarrow 0}\frac{g_{T}(x)}{x(x+1)}=1+g_{T'}(2)\equiv 1+2\cdot 3\equiv 3\pmod{4},\\
h_{T}(-1)&=\lim_{x\rightarrow -1}\frac{g_{T}(x)}{x(x+1)}=1+g_{T'}(1)\equiv 1+1\cdot 2\equiv 3\pmod{4}.
\end{align*}
In particular, the polynomial $h_{T}(x)$ cannot be a square of a polynomial with integer coefficients and thus has at least one root of odd multiplicity. Consequently, the polynomial $g_{T}(x)$ has at least three roots of odd multiplicity.




Let us observe that if $g_{T}(x)=\pm 1$ for some $x\in\Z$, then necessarily $x\cdot\ldots\cdot(x+a_{1})=\pm 1$, which is clearly impossible for $a_{1}\geq 1$.

\end{proof}

\begin{rem}\label{reminfi}
{\rm We note that in general we cannot have similar result in the case $a_{1}=0$ (but see Conjecture \ref{conj1} below). Indeed, if $n=3$ and $T=(0,1)$ then
$$
g_{T}(x)=x(x+2)^3.
$$
Consequently, the equation $g_{T}(x)=y^3$ has infinitely  many integer solutions of the form $(x,y)=(t^3,t(t^3+2))$, where $t\in\Z$. Moreover, let us note that in this case the equation $g_{T}(x)=y^{4}$ has infinitely many rational solutions. Indeed, let us take $y=t(x+2)$. We get the equation $t^4(x+2)^4=x(x+2)^3$. Consequently, by solving for $x$ we see that for each $t\in\Q\setminus\{-1,1\}$ the pair
$$
(x,y)=\left(\frac{2t^4}{1-t^4},\frac{2t}{1-t^4}\right)
$$
is a solution of our equation.
}
\end{rem}

\begin{rem}
{\rm Let us observe that the above lemma can be further generalized. Indeed, instead of working with the polynomial $p_{n}(x)$ one can consider a more general form. Let us consider the polynomial
$$
P_{p,q,n}(x)=\prod_{i=0}^{n}(px+q),
$$
where $p\in\N, q\in\Z$ and $|q|<p$. Then, for $T=(a_{1},\ldots,a_{k})\in A_{n}, n\in\N$, a similar lemma can be proved for the polynomial
$$
G_{p,q,T}(x)=P_{p,q,n}(x)+\sum_{i=1}^{k}P_{p,q,a_{i}}(x).
$$
}
\end{rem}
As an immediate consequence of the above lemmas we get the following result.

\begin{thm} Let $n\in\N_{\geq 2}, T=(a_{1},\ldots,a_{k})\in A_{n}$. If $a_{1}\geq 2$ or $a_{1}=1, a_{2}=3, a_{3}\geq 5$ then for the integer solutions of the Diophantine equation $y^m=g_{T}(x)$ we have:
\begin{enumerate}
\item if $y\neq 0$, then $m<c_{1}(n)$,
\item if $m\geq 3$, then $\op{max}\{m,|x|,|y|\}<c_{2}(n)$,
\item if $m=2$, then $\op{max}\{|x|,|y|\}<c_{3}(n)$.
\end{enumerate}
Here $c_{1}(n), c_{2}(n), c_{3}(n)$ are effectively computable constants depending only on $n$.
\end{thm}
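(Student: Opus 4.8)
The plan is to read the three assertions straight off the two general bounds, Lemma~\ref{L1} and Lemma~\ref{L2}, once the hypotheses of those lemmas have been matched to the structural facts recorded in Lemma~\ref{firstprop}. First I would pin down that the constants depend on $n$ alone: the set $A_{n}$ is finite (of cardinality $2^{n}-1$), the degree of $g_{T}$ is bounded by a function of $n$, and its height $H$ is likewise bounded by a quantity depending only on $n$; hence, taking the maximum of the constants $c_{1}(d,H), c_{2}(d,H)$ appearing in Lemmas~\ref{L1} and~\ref{L2} over the finitely many $T\in A_{n}$ in the prescribed range, one obtains constants depending only on $n$. From now on fix such a $T$.

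For assertion (1) I would take a solution with $y\neq 0$. If $|y|=1$ then $g_{T}(x)=\pm 1$, which is impossible by Lemma~\ref{firstprop}(4); therefore $|y|>1$. Since $g_{T}$ has at least two distinct roots by Lemma~\ref{firstprop}(2), Lemma~\ref{L1} applies and yields $m<c_{1}(n)$.

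For assertion (2), let $m\geq 3$. By Lemma~\ref{firstprop}(2) the points $0,-1,\ldots,-a_{1}$ are simple roots of $g_{T}$, and since $a_{1}\geq 1$ there are at least two of them; as multiplicity $1$ is coprime to every $m$, hypothesis (1) of Lemma~\ref{L2} is satisfied and gives $\op{max}\{|x|,|y|\}<c_{2}(n)$, while the bound on $m$ is furnished by assertion (1) whenever $y\neq 0$ (the finitely many solutions with $y=0$ have $x$ equal to an integer root of $g_{T}$). For assertion (3), let $m=2$; here the task is to produce three roots of odd multiplicity. If $a_{1}\geq 2$ the simple roots $0,-1,\ldots,-a_{1}$ already number $a_{1}+1\geq 3$; if instead $a_{1}=1, a_{2}=3, a_{3}\geq 5$, then Lemma~\ref{firstprop}(3) supplies three roots of odd multiplicity. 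In either case hypothesis (2) of Lemma~\ref{L2} holds and yields $\op{max}\{|x|,|y|\}<c_{3}(n)$.

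The argument is essentially mechanical, so there is no single hard step; the two points requiring care are, first, the disposal of the excluded value $|y|=1$ via Lemma~\ref{firstprop}(4), which is precisely what allows Tijdeman's bound to be invoked for assertion (1), and second, the observation that the bifurcation in the hypothesis ($a_{1}\geq 2$ versus $a_{1}=1, a_{2}=3, a_{3}\geq 5$) is exactly what guarantees three roots of odd multiplicity in the square case $m=2$, since for $a_{1}=1$ the two obvious simple roots $0,-1$ do not suffice and one must appeal to the non-square property of $h_{T}$. The uniformity of the constants over the finitely many $T\in A_{n}$ is the only remaining bookkeeping issue.
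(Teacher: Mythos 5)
Your proof is correct and follows essentially the same route as the paper: match the structural facts of Lemma~\ref{firstprop} to the hypotheses of Lemmas~\ref{L1} and~\ref{L2}, using part (4) of Lemma~\ref{firstprop} to dispose of $|y|=1$ and the case split $a_{1}\geq 2$ versus $a_{1}=1,a_{2}=3,a_{3}\geq 5$ to secure three odd-multiplicity roots when $m=2$. If anything you are slightly more careful than the paper, which for the case $m\geq 3$ cites the second part of Lemma~\ref{L2} where (as you correctly do) the first part, on two roots with multiplicities coprime to $m$, is the one that applies.
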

\begin{proof}
The first part is an immediate consequence of Lemma \ref{firstprop} and Lemma \ref{L1}.

In order to get the second part we note that the roots $x=0,-1$ of the polynomial $g_{T}(x)$ are simple. Moreover, the degree $\op{deg}g_{T}(x)=n+1$ is grater then two, and our claims follow from the second part of Lemma \ref{L2}.

Finally, in order to get the last part from the statement we note that the roots $x=0,-1,-2$ of the polynomial $g_{T}(x)$ are simple (in case of $a_{1}\geq 2$) or that the roots $x=0,-1$ are simple and we have one more root with odd multiplicity of the polynomial $h_{T}(x)$ (which is a consequence of the third part of Lemma \ref{firstprop}). Moreover, the degree $\op{deg}g_{T}(x)=n+1$ is grater then two, and our claims follow from the second part of Lemma \ref{L2}.
\end{proof}


\begin{rem}
{\rm The crucial property which guarantees the finiteness of the set of integer solutions of equation (\ref{maineq}) is the number of multiple roots of the polynomial $g_{T}$, with $T\in A_{n}$. Based on our computations we formulate the following conjecture.

\begin{conj}\label{conj1}
Let $n\in\N$ and $T\in A_{n}$ be given. The polynomial $g_{T}(x)$ has multiple roots if and only if:
\begin{enumerate}
\item $T=(n-4)$ for $n\geq 4$ with $(x^2+(2n-3)x+n^2-3n+1)^2| g_{T}(x)$,
\item $T=(n-3,n-2)$ for $n\geq 3$ with $(x+n-1)^3|g_{T}(x)$,
\item $T=(n-2,n-1)$ for $n\geq 2$ with $(x+n)^2|g_{T}(x)$.
\end{enumerate}
In each of the above cases the corresponding co-factors have no multiple roots.
\end{conj}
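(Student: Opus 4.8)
The plan is to separate the two implications, which are of very different character.

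\emph{The ``if'' direction} reduces to three explicit factorizations, each obtained by pulling out the smallest product and recognising the remaining bracket as a perfect power. For $T=(n-2,n-1)$ one writes $g_{T}(x)=p_{n-2}(x)\bigl[(x+n-1)(x+n)+(x+n-1)+1\bigr]$ and, with $u=x+n$, the bracket is $(u-1)u+(u-1)+1=u^{2}$, so $g_{T}(x)=p_{n-2}(x)(x+n)^{2}$. For $T=(n-3,n-2)$ the bracket is $(u-1)u(u+1)+(u-1)+1=u^{3}$ with $u=x+n-1$, giving $g_{T}(x)=p_{n-3}(x)(x+n-1)^{3}$. For $T=(n-4)$ one is left with a product of four consecutive linear factors plus $1$, and the classical identity $m(m+1)(m+2)(m+3)+1=(m^{2}+3m+1)^{2}$ with $m=x+n-3$ yields $g_{T}(x)=p_{n-4}(x)\,(x^{2}+(2n-3)x+n^{2}-3n+1)^{2}$. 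In each case the claimed divisibility is immediate, and the co-factor is $p_{a_{1}}$, a product of distinct linear factors and hence squarefree; in the last case one checks in addition that the quadratic has discriminant $5$, so it is squarefree with two irrational roots disjoint from the integer roots of $p_{n-4}$, confirming that its square accounts for all of the multiplicity.

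\emph{The ``only if'' direction} is the substance, and I would approach it through the recursive factorization of Lemma~\ref{firstprop}. Assume first $a_{1}\geq 1$. Then $g_{T}(x)=p_{a_{1}}(x)h_{T}(x)$ with $h_{T}(x)=1+g_{T'}(x+a_{1}+1)$ and $T'=(a_{2}-a_{1}-1,\ldots,a_{k}-a_{1}-1,n-a_{1}-1)$; since the roots $0,-1,\ldots,-a_{1}$ of $p_{a_{1}}$ are simple and $h_{T}$ is positive there, the multiple roots of $g_{T}$ are exactly those of $h_{T}$. Setting $y=x+a_{1}+1$, the question becomes: for which admissible tuples does $1+g_{T'}(y)$ fail to be squarefree? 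This step lowers the top index from $n$ to $N:=n-a_{1}-1$ and drops one tuple entry, and a direct computation shows that the three families collapse precisely to the base identities $1+p_{3}(y)=(y^{2}+3y+1)^{2}$ (top $N=3$, empty tuple), $1+p_{2}(y)+p_{0}(y)=(y+1)^{3}$ (top $N=2$, tuple $(0)$) and $1+p_{1}(y)+p_{0}(y)=(y+1)^{2}$ (top $N=1$, tuple $(0)$). The finitely many configurations with $a_{1}=0$, which arise only as the smallest members $n=2,3,4$ of the three families, I would dispose of by the same factorizations done by hand.

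The main obstacle is that the added constant destroys the recursion: in $1+g_{S}(y)=1+p_{N}(y)+\sum_{i}p_{b_{i}}(y)$ the smallest product $p_{b_{1}}$ does not divide the constant term, so the factorization cannot be iterated and a clean induction on $n$ is unavailable. I would instead try to localize any multiple root. Such a root $y_{0}$ satisfies $g_{S}(y_{0})=-1$ and $g_{S}'(y_{0})=0$; since $g_{S}$ equals $p_{N}$ plus terms of strictly smaller degree, and since all summands vanish at $y=0$ (so that $1+g_{S}(0)=1\ne 0$), the small value $-1$ at a critical point is expected to occur only near the extreme factor $y=-N$, that is, for $x$ near $-n$, consistent with the location of the multiple roots in all three families. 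The hard technical point is to make this confinement uniform in $N$ and in the whole tail, ruling out any conspiracy of the perturbation $\sum_{i}p_{b_{i}}$ with $p_{N}$ that would produce a critical value $-1$ away from the top. If such a localization can be established, the only-if direction collapses to a finite analysis of the top entries $a_{k-1},a_{k}$ relative to $n$, which should isolate exactly the three families, and the co-factor statements then follow from the explicit factorizations above. A more computational alternative is to view $\op{disc}(g_{T})$, equivalently $\op{Res}(g_{T},g_{T}')$, as a polynomial in $n$ for each fixed gap-pattern at the top of $T$, and to prove that it vanishes identically only for the three listed patterns while having no sporadic integer zeros otherwise; here the difficulty is the combinatorial range of patterns together with the growth of the resultant, which would require a structural factorization to control. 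I expect the localization route to be the more tractable of the two.
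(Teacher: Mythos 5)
First, a point of reference: the paper offers no proof of this statement at all --- it is explicitly labelled a conjecture, formulated ``based on our computations'', so there is nothing in the paper to compare your argument against. Your ``if'' direction is correct and complete, and is already more than the paper records. The three factorizations $g_{(n-2,n-1)}(x)=p_{n-2}(x)(x+n)^{2}$, $g_{(n-3,n-2)}(x)=p_{n-3}(x)(x+n-1)^{3}$ and $g_{(n-4)}(x)=p_{n-4}(x)\,(x^{2}+(2n-3)x+n^{2}-3n+1)^{2}$ all check out (the last via $m(m+1)(m+2)(m+3)+1=(m^{2}+3m+1)^{2}$ with $m=x+n-3$), the cofactor $p_{a_{1}}$ is squarefree with integer roots, and the quadratic has discriminant $5$, so the listed squares and cubes account for all multiplicity. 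Two of these identities are in fact used implicitly elsewhere in the paper (Remark~\ref{reminfi} and the identities in Section~\ref{sec4}), so this part is consistent with the authors' own computations.

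The ``only if'' direction, however, is not a proof but a programme, and that is where the entire content of the conjecture lies. After your (correct) reduction via Lemma~\ref{firstprop} to the squarefreeness of $h_{T}(y)=1+g_{T'}(y)$, you do not settle a single infinite family. Even the simplest subcase, $T=(a_{1})$ a singleton, amounts to showing that $1+\prod_{i=0}^{N}(y+i)$ is squarefree for every $N\neq 3$; this needs a genuine input (for consecutive integer roots it follows from the classical irreducibility theorem for $\prod_{i}(x-a_{i})+1$, whose only square exceptions are the gap-two quadratic and the four-consecutive-roots case), and you neither invoke nor prove it. For $k\geq 2$ the polynomial $1+p_{N}(y)+\sum_{i}p_{b_{i}}(y)$ is not of that classical form, and your localization claim --- that a critical point with critical value $-1$ can only occur near $y=-N$ --- is stated as an expectation with no estimate; making it uniform in $N$ and in the tail $\sum_{i}p_{b_{i}}$ (which can have degree as large as $N$) is precisely the open difficulty, not a technicality. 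You also understate the $a_{1}=0$ case: for each $n$ there are $2^{n-1}$ tuples with $a_{1}=0$, not merely the smallest members of the three families, although your factorization $g_{T}(x)=x\,(1+g_{T'}(x+1))$ does extend to them. In short, the forward implication is proved, the converse is not, and no argument currently known to the authors (or supplied by you) closes it.
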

}
\end{rem}

\section{Rational solutions of the equation $y^2=g_{T}(x)$ with $T\in A_{n}, n\leq 5$}\label{sec3}

Let $n\in\N$ and for given $T\in A_{n}$ let us consider the algebraic curve
$
C_{T}:\;y^2=g_{T}(x).
$
Let us write $gen(T):=\op{genus}(C_{T})$ - the genus of the curve $C_{T}$ and $J_{T}:=\op{Jac}(C_{T})$ - the Jacobian variety associated with $C_{T}$. Moreover, we define $r(T):=\op{rank}(J_{T})$ - the rank of the Jacobian variety $J_{T}$. As usual, by $C_{T}(\Q)$ we will denote the set of all rational points on the curve $C_{T}$ and by $C_{T}(\Z)$ - the set of integral points on $C_{T}$.


If $T\in A_2,A_3$ or $A_4,$ then using standard method of parametrization of curves we get the description of the set of rational points in cases such that $gen(T)=0,$ e.g.
for $T=(0)$ we have $g_{T}(x)=x(x^2+5x+5)^2$ and the description of the set $C_{T}(\Q)$ is
$\{(t^2,t(t^4+5t^2+5)):\;t\in\Q\}.$ If $T\in A_2,A_3$ or $A_4$ and $gen(T)=1,$ then we get the list of integral points (rational points in case of rank 0 curves) by MAGMA \cite{Mag}. If $T\in A_4,$ then we also obtain genus 2 curves, fortunately, in each case, the rank of the Jacobian variety $J_{T}$ associated with $C_{T}$ is bounded by 1. Thus in each case we can apply Chabauty's method \cite{Chab} in order to find complete set of rational points on the curve $C_{T}$. The procedures in case of genus 2 curves were implemented in {\sc Magma} based on papers by Stoll \cite{StollB1,Stoll,StollB2}. More precisely, in case of $r_{T}=0$ we can use directly the following commands:

{\tt
\hskip 0.5cm
\begin{tabular}{l}
A<x>:=PolynomialRing(Rationals());\\
C:=HyperellipticCurve(f(x));\\
J:=Jacobian(C);\\
Chabauty0(C);
\end{tabular},
}

\noindent where $f$ is our polynomial of degree 5 or 6 without multiple roots and such that rank of $J$ is equal to 0. The computation of bound for the rank is performed with the procedure {\tt RankBound(J)}.

In the case when the rank of the Jacobian is equal to 1, the situation is a bit different. We first apply the procedure {\tt Points(J: Bound:=$10^3$)} in order to find the set, say $A$, of all rational divisors on the Jacobian $J$ with the height bounded by $10^3$. The chosen bound is grater then $e^{h(J)}$, where $h(J)$ is the height constant associated with the Jacobian under consideration. Then, we compute the reduced basis, say $R$, with the help of the procedure {\tt ReducedBasis(A)}. Next, we look for divisors of infinite order in $R$, by checking the order of the elements of $R$ with the help of procedure {\tt Order(P)} for each $P\in R$. Finally, for the set $B$ of all divisors of infinite order we apply the procedure {\tt Chabauty(B)} and get the required set of all rational points on $C$. For example, if $n=5$ and $T=(4)$, then
$$
C_{T}:\;y^2=x(1+x)(2+x)(3+x)(4+x)(6+x).
$$
In particular, $gen(T)=2, r_{T}=1$ and the set of finite rational points on the curve $C_{T}$ is as follows
$$
C_{T}(\Q)=\{(-6,0), (-4,0), (-3,0), (-2,0), (-1,0), (0,0), (-12/7, \pm 720/7)\}.
$$
Similarly, if $T=(2,3,4)$ then
$$
C_{T}:\;y^2=x(1+x)(2+x)(4+x)(19+9x+x^2)
$$
and $gen(T)=2, r_{T}=1$. Then we get
$$
C_{T}(\Q)=\{(-38/11, \pm 1368/11), (-4,0), (-2,0), (0,0)\}.
$$
Detailed results can be downloaded from \cite{shrek}. Unfortunately, we were unable to characterize rational solutions on the curve $C_{T}$ for some $T\in A_{5}$. However, numerical computations allow to formulate the following

\begin{conj}
	Let $T\in A_{5}$ and $r(T)>1$, then the corresponding values of $T$ and the sets $C_{T}(\Q)$ are as follows.
	
	In the description of the set $C_{T}(\Q)$ we omit the points at infinity $(1,\pm 1, 0)$.
	\begin{center}
		\begin{equation*}
		\begin{array}{|l|l|l|}
		\hline
		T         & r(T) & C_{T}(\Q) \\
		\hline
		\hline
		(2)       & 2 & \{(-2,0), (-1,0), (0,0), (2/3, \pm 460/3)\} \\
		\hline
		(3)       & 2 & \{(-9, \pm 252), (-3,0), (-2,0), (-1,0), (0,0), (-1,0), (0,0), (-18/5, \pm 468/5)\} \\
		\hline
		(0,4)     & 4 & \{(0,0), (1,\pm 29), (9/4, \pm 5871/4)\} \\
		\hline
		(1,3)     & 2 & \{(-4,\pm 6), (-1,0), (0,0)\} \\
		\hline
		(1,4)     & 2 & \{(-1,0), (0,0)\} \\
		\hline
		(0, 1,2)     & 2 & \{(-2,0), (0,0), (1,\pm 27)\} \\
		\hline
		(0,2,3)     & 2 & \{(0,0)\} \\
		\hline
		(1,2,3)   & 2 & \{(-3,0), (-1,0), (0,0)\} \\
		\hline
		(1,3,4)   & 2 & \{(-4,\pm 6), (-1,0), (0,0), (-25/9,\pm 620/9)\} \\
		\hline
		(0,1,2,3)   & 3 & \{(-4,0), (-2,0), (-1,0), (0,0), (-13/3, \pm 91/3), (-5/3, \pm 55/3)\} \\
		\hline
		(0,1,3,4)   & 2 & \{(-3,0), (-1,0), (0,0)\} \\
		\hline
		\end{array}
		\end{equation*}
	\end{center}
	\begin{center} {\rm Table 1. Conjectured form of the set of all (finite) rational points on the genus two curves $C_{T}$, where $T\in A_{5}$ and $r(T)\geq 2$.} \end{center}
	
\end{conj}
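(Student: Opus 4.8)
The statement lists, for eleven specific tuples $T\in A_{5}$ with $r(T)\geq 2$, the full set of finite rational points on the genus two curve $C_{T}$; since these are precisely the cases in which the rank of the Jacobian meets or exceeds the genus, the Chabauty--Coleman method used in the preceding discussion no longer applies, and this is what makes the assertion a conjecture rather than a theorem. The plan is as follows. The easy direction is to check that each point listed in Table~1 really lies on $C_{T}$, which is a direct substitution, and to confirm the claimed value $r(T)$ by exhibiting $r(T)$ independent classes of infinite order in $J_{T}(\Q)$ together with a matching upper bound from a two-descent, for instance via {\tt RankBound(J)} in {\sc Magma}. What remains, and what carries all the content, is to prove that no further finite rational points exist (the two rational points at infinity $(1,\pm 1,0)$ being omitted from the table by convention).

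For this upper bound I would use the method of covering collections combined with elliptic curve Chabauty. Starting from the factorisation $g_{T}(x)=p_{a_{1}}(x)h_{T}(x)$ supplied by Lemma~\ref{firstprop}, one performs a descent on $C_{T}$: over a suitable number field $K$ in which $g_{T}$ acquires enough linear factors, one builds a finite family of twisted covers $D_{\delta}\to C_{T}$, each of two-power degree, so that every rational point of $C_{T}$ lifts to a $K$-rational point on exactly one $D_{\delta}$. Each cover is arranged to carry a morphism $\psi_{\delta}\colon D_{\delta}\to E_{\delta}$ to an elliptic curve over $K$, and I would then apply elliptic curve Chabauty (Bruin's method, as implemented in {\sc Magma}) to those $D_{\delta}$ for which $\op{rank}E_{\delta}(K)<[K:\Q]$, recovering the finitely many admissible $x$-coordinates and hence the finitely many points of $C_{T}(\Q)$.

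To discard the covers that survive and to glue the local data, I would run a Mordell--Weil sieve on $J_{T}$: using the generators found above, one compares the image of $C_{T}(\Q)$ in $\prod_{p\in S}J_{T}(\F_{p})$ with the image of the known points for a well-chosen finite set of primes $S$, ruling out every residue class not already accounted for by Table~1. For the tuples with $r(T)=2$, where the rank equals the genus, an attractive alternative is quadratic Chabauty, which is designed for exactly this borderline regime (and is available when the N\'eron--Severi rank of $J_{T}$ exceeds one), and which would bypass the search for auxiliary elliptic quotients altogether.

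The hard part will be exactly the failure of the rank hypothesis: because $r(T)\geq\op{genus}(C_{T})=2$, no single method is guaranteed to terminate. The descent forces one to compute class groups, unit groups and, crucially, Mordell--Weil ranks of the elliptic curves $E_{\delta}$ over number fields $K$ that may have large degree; if $\op{rank}E_{\delta}(K)\geq[K:\Q]$ for some surviving $\delta$, the elliptic Chabauty step stalls, and the rank computations themselves may only be conditional. It is precisely the possibility that, for some $T$, none of the auxiliary rank inequalities hold that blocks a uniform argument and explains why the authors record the result as a conjecture.
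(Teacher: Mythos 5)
There is an important mismatch of expectations here: the statement you were asked to prove is labelled a \emph{conjecture} in the paper, and the authors supply no proof of it. They state explicitly that they were unable to determine $C_{T}(\Q)$ for these tuples, that the table is the output of numerical search, and that they attempted precisely the strategy you outline --- elliptic Chabauty via covering collections in the sense of Bruin --- but ``there were cases we could not find sufficiently many independent points or we could not compute the rank of the genus 1 curve defined over a number field,'' the worst case being $T=(0,4)$, where the Jacobian has rank $4$ and one must work over a degree $5$ number field. So your diagnosis of the obstruction (rank $\geq$ genus defeats classical Chabauty--Coleman) and your proposed toolkit (twisted covers, elliptic Chabauty, Mordell--Weil sieve, possibly quadratic Chabauty) are exactly the right things to say about this problem, and they coincide with what the authors actually tried.

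That said, as a \emph{proof} your proposal has a genuine gap, and you have in effect named it yourself in your final paragraph: nothing is established unconditionally. Verifying that the listed points lie on $C_{T}$ and that the rank lower bounds hold is routine, but the entire content --- that no further rational points exist --- rests on computations (class groups and Mordell--Weil ranks of elliptic curves over number fields of degree up to $5$ or $6$, termination of the sieve, applicability of quadratic Chabauty to these particular Jacobians) that are not carried out and that the authors report could not be completed for at least some of these eleven curves. A proof must either execute these computations and exhibit their output, or replace them with an argument that does not depend on them; a description of which software commands one would run, together with an admission that they may stall, does not close the statement. The honest conclusion is that your text is a correct and well-informed research plan for attacking the conjecture, not a proof of it --- which is consistent with the fact that the paper records it as an open conjecture rather than a theorem.
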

We tried to determine the complete list of rational points on the remaining genus 2 curves via the so-called elliptic Chabauty's method \cite{NB1,NB2}, but there were cases we could not find sufficiently many independent points or we could not compute the rank of the genus 1 curve defined over a number field. The most difficult one seems to be the case with $T=(0,4).$ The genus 2 curve is given by
$
y^2=x(x^5 + 16x^4 + 95x^3 + 260x^2 + 324x + 145).
$
The rank of the Jacobian is 4 and one needs to work over a degree 5 number field.

\section{Application of Runge method for several equations $y^{m}=g_{T}(x)$}\label{sec4}

Consider the Diophantine equations $y^2=g_{T}(x)$ for $T\in A_5,A_7,A_9,A_{11}$ and $A_{13}.$
In all cases $g_T(x)$ is a monic polynomial of degree 6, hence Runge's condition is satisfied. An algorithm to solve such Diophantine equations is given in \cite{TenFG}, we followed it to determine the integral solutions. We note that in case of $T\in A_{13}$ there are $2^{13}-1$ equations to be solved and the bounds obtained by Runge's method are of size $10^6.$ Therefore we applied a modified version of the reduction argument used in \cite{TenFG}. We illustrate the idea through an example. If $T=(2, 3, 4, 5, 7, 9, 10, 12),$ then the equation is given by
{\small
	\begin{eqnarray*}
	y^2&=&(x^{10} + 85x^9 + 3200x^8 + 70211x^7 + 993342x^6 + 9458533x^5 + 61303921x^4 + 266606990x^3 + \\
	&&742982499x^2 + 1194792102x + 838752409)(x + 4)(x + 2)(x + 1)x
	\end{eqnarray*}}
The polynomial part of the Puiseux expansion of $g_T(x)^{1/2}$ is given by
$$
P_T(x)=x^{7} + 46 \, x^{6} + \frac{1693}{2} \, x^{5} + \frac{15931}{2} \, x^{4} + \frac{323643}{8} \, x^{3} + \frac{212995}{2} \, x^{2} + \frac{1953743}{16} \, x + \frac{574129}{16}.
$$
We obtain that
\begin{eqnarray*}
256g_T(x)-(16P_T(x)-1)^2&&\mbox{ has roots in the interval } I_a:=[-68,\ldots,2.018\times 10^6],\\
256g_T(x)-(16P_T(x)+1)^2&&\mbox{ has roots in the interval } I_b:=[-1.01\times 10^6,\ldots,0].
\end{eqnarray*}
Hence it remains to solve the equations
\begin{eqnarray*}
y^2&=&g_T(x) \mbox{ with }x\in[-1.01\times 10^6,\ldots,2.018\times 10^6],\\
P_T(x)^2-g_T(x)&=&0.
\end{eqnarray*}
Therefore the total number of equations to handle is 3026952.
We compute the appropriate intervals in case of two positive integers $k_1,k_2:$
\begin{eqnarray*}
256g_T(x)-(16P_T(x)-k_1)^2&&\mbox{ has roots in the interval } I_1,\\
256g_T(x)-(16P_T(x)+k_2)^2&&\mbox{ has roots in the interval } I_2.
\end{eqnarray*}
For some fixed $k_1,k_2$ we determine the integral solutions of the equations
\begin{eqnarray*}
y^2&=&g_T(x) \mbox{ with }x\in I_1\cup I_2,\\
(P_T(x)+k/16)^2-g_T(x)&=&0 \mbox{ for some values of $k$ depending on $k_1,k_2.$}
\end{eqnarray*}
The goal is to reduce the number of these type of equations. Based on numerical experiences we start with $k_1=|I_a|^{1/4},k_2=|I_b|^{1/4},$ where $|\cdot|$ denotes the number of integers in the given interval.
We compute the intervals $I_1,I_2$ for these values of $k_1,k_2.$ If the number of equations is smaller than in the previous step, then $k_1=2|I_a|^{1/4}$ and $k_2=2|I_b|^{1/4}.$ Therefore at the end we will have $k_1=i_1|I_a|^{1/4}$
and $k_1=i_2|I_b|^{1/4}$ for some integers $i_1,i_2.$ In the above example the interval $I_a$ is reduced to $[-69,\ldots,2280]$ in 23 steps, so $k_1=851$, the interval $I_b$ is reduced to $[-1674,\ldots,0]$ in 20 steps, thus $k_2=620.$
The number of equations to handle before reduction was more than 3 million, after the above reduction it is less than 6000.

We also note that equations for which $\gcd(m,n+1)\geq 2$ can be solved using Runge's method. For example if $n=14$ and $T=(10,11,12,13),$ then we have
$$
y^m={\left(x^{3} + 39 \, x^{2} + 504 \, x + 2157\right)} {\left(x + 12\right)}p_{10}(x),
$$
an equation that can be solved using Runge's method for $m=3,5$ and 15. We note that in all cases only the trivial solutions with $y=0$ exist. In this way we were able to determine all solutions of equation \eqref{maineq} with $(m,n)\in\{(5,3),(8,3),(11,3),(4,5),(9,5),(6,7)\}.$
Detailed results in case of $(m,n)=(5,3)$ can be downloaded from \cite{shrek}.
There are only a few types of non-trivial solutions, these are as follows
$$
\{(-8,-2),(-8,12),(-5,-5),(-4,2),(-1,-1)\}.
$$
According to our computations we see that there are very few solutions of the equation $y^{m}=g_{T}(x)$ with $xy\neq 0$. This may suggest to treat the equation $y^{m}=g_{T}(x)$ as an equation in an unbounded number of variables and look for its solutions in $m\in\N_{\geq 2}, n\in\N, T=(a_{1},\ldots,a_{k})\in A_{n}$ and $x, y\in\Z$ satisfying the condition $xy\neq 0$. However, stating the problem in this way one can easily get infinitely many solutions. Indeed, in Remark \ref{reminfi} we observed that the equation $y^{3}=x(x+2)^{3}=g_{T}(x)$, with $n=3$ and $T=(0,1)$ has infinitely many solutions of the form $(x,y)=(u^3,(u(u^3+2))$. Thus, by taking $u<0$, we see that for each $n\in\N_{\geq 3}$ and $T'\in A_{n}$ of the form $T'=(0,1,a_{3},\ldots, a_{k})$, where $a_{3}\geq |u|$ we have $g_{T'}(u)=(u(u^3+2))^3$. This is a consequence of the vanishing of $p_{a_{i}}(u)$ for $a_{i}\geq |u|$. Let us also note that essentially each negative value of $x$ which is a solution of the equation $y^m=g_{T}(x)$ for some $m\in\N$ and $T\in A_{n}$ leads in the same way to infinitely many of $T'$ such that $y^{m}=g_{T'}(x)$. These observations motivate us to state the following

\begin{ques}\label{prob1}
For which integer values of $x<0$ is there an $m\in\N_{\geq 2}, n\in\N_{\geq 2}$ and $T\in A_{n}$ such that $y^{m}=g_{T}(x)$ with $y\neq 0$?
\end{ques}

Let us introduce the set
$$
\cal{N}:=\{x\in\N_{\leq 0}:\;\mbox{there is}\; (m, n, T)\in \N_{\geq 2} \times \N_{\geq 2} \times A_{n}:\; y^{m}=g_{T}(x)\;\mbox{for some}\;y>0\}.
$$
We saw that $-u^3\in\cal{N}$ for $u\in\N$. Let us also note that $-9,-5,-4\in\cal{N}$. Indeed, the pair $(x,y)=(-9,252)$ is a solution of the equation $y^2=g_{T}(x)$ for $n=5, T=(3)$ and
$(x,y)=(-5,-5)$ is a solution of the equation $y^3=g_{T}(x)$ for $n=4, T=(0)$. Moreover, $(x,y)=(-4,6)$ is a solution of the equation $y^2=g_{T}(x)$ for $n=3, T=(1)$. We do not know any other elements of $\cal{N}$. However, if we allow $x$ to be {\it rational}, then we get some additional interesting solutions. Indeed, one can easily prove the identities:
$$
p_{4k+3}\left(-\frac{4k-1}{2}\right)+p_{4k-1}\left(-\frac{4k-1}{2}\right)=\left(\frac{16k^2+32k+11}{4^{k+1}}\prod_{i=0}^{2k-1}(2i+1)\right)^2,
$$
$$
p_{4k+1}\left(-\frac{4k-1}{2}\right)+p_{4k}\left(-\frac{4k-1}{2}\right)+p_{4k-1}\left(-\frac{4k-1}{2}\right)=\left(\frac{4k+3}{2^{2k+1}}\prod_{i=0}^{2k-1}(2i+1)\right)^{2}
$$
In order to prove the first equality we note $p_{4k+3}(x)+p_{4k-1}(x)=p_{4k-1}(x)(x^2+(8k+3)x+16k^2+12k+1)^2$. Moreover,
\begin{align*}
p_{4k-1}\left(-\frac{4k-1}{2}\right)&=\prod_{i=0}^{2k-1}\left(-\frac{4k-1}{2}+i\right)\left(-\frac{4k-1}{2}+i+2k\right)\\
                                    &=\frac{1}{4^{2k}}\prod_{i=0}^{2k-1}(4k-2i-1)(2i+1)=\frac{1}{4^{2k}}\prod_{i=0}^{2k-1}(2i+1)^2,
\end{align*}
where last equality follows from the equality of sets $\{2i+1:\;i\in\{0,2k-1\}\}=\{4k-2i-1:\;i\in\{0,2k-1\}\}$.

In order to prove the second equality it is enough to note the identity $p_{4k+1}(x)+p_{4k}(x)+p_{4k-1}(x)=p_{4k-1}(x)(x+1)^2$.

In the light of the above identities one can formulate the following

\begin{ques}\label{ques2}
Let us consider the equation $y^{m}=g_{T}(x)$ in unbounded many variables $m\in\N_{\geq 2}, n\in\N_{\geq 2}, T\in A_{n}$, where $m, n$ are chosen in such a way that the genus of the curve defined by our equation is positive. Is the set of positive integer solutions infinite?
\end{ques}

We known only very few solutions of the equation from the question above. For the convenience of the reader we collect them in the table below:
\begin{center}
\begin{equation*}
\begin{array}{|l|l|}
\hline
  x & [m,\;n,\;T] \\
  \hline
  1 & [2, 4, (0)], [2, 5, (0,4)], [2, 5, (0,1,2)], [2, 6, (0)], [2, 6, (3,4)], [2, 7, (0,3,4,5,6)],  \\
    & [2, 8, (0,3,7)], [2, 8, (0,1,2,5)], [2, 9, (0,1,2,5,6,7], [2, 14, (0,1,2,6,\ldots,13)]\\
    & [3, 5, (0,1,2)], [5, 3, (1,2)], [7, 4, (1,2)]\\
  2 & [2, 3, (2)], [2, 5, (2,3)], [7, 3, (0,1)] \\
  4 & [2, 6, (0,4)]\\
  \hline
\end{array}
\end{equation*}
\end{center}
\begin{center}
Table 2. Known solutions of the equation $y^{m}=g_{T}(x)$ with positive values of $x$.
\end{center}

One can also ask about positive {\it rational} solutions but we were unable to prove anything similar like in the case of negative solutions.

Let us also observe that the problem concerning the existence of solutions with $x=1$ is equivalent with the problem of finding integer solutions of the Diophantine equation of polynomial-factorial type
$$
y^{m}=\sum_{i=1}^{n}(a_{i}+1)!
$$
in non-negative integers $a_{1}, a_{2}, \ldots$ and $y, m\in\N$. This is a consequence of the equality $p_{a}(1)=(a+1)!$.

\section{Some results concerning an additive version of Erd\H{o}s-Graham question}\label{sec5}

In this section we study the problem of existence of integer solutions of the Diophantine equation (\ref{nequation}). We are mainly interested in the case when $m=2, 3$ and consider the related Diophantine equations for certain sequences chosen from the set $B_{2}$.

From the geometric point of view equation (\ref{nequation}) defines an algebraic variety of dimension $n$ and degree $\op{max}\{m,a_{n}+n\}$. As usual, the general expectation (when dealing with Diophantine equations with small degree and many variables) is the following. If $m$ is not too large and $\op{max}\{a_{1},\ldots,a_{m}\}$ is relatively small compared with $m$, then equation (\ref{nequation}) should have infinitely many solutions in integers.
\begin{rem}
{\rm Let us recall that if $a_{1}=2$ or $a_{1}=a_{2}=3$, then for each $n-2$ tuple $(a_{3},\ldots,a_{n})\in B_{n-2}$, the Diophantine equation
$$
y^2=p_{a_{1}}(x_{1})p_{a_{2}}(x_{2})\ldots p_{a_{n}}(x_{n})
$$
has infinitely many solutions in positive integers $(x_{1},\ldots, x_{n})$ satisfying the condition $x_{i}+a_{i}<x_{i+1}$ for $i=1,\ldots,n-1$. This was proved by Bauer and Bennett in \cite{BaBe}.
An (additive) analogue of the above result of Bauer and Bennett can be obtained via the identities
$$
p_{1}(x-1)+x=x^2, \quad p_{2}(x-1)+x=x^3
$$
with $x=\sum_{i=2}^{n}p_{a_{i}}(x_{i}).$
}
\end{rem}
By applying results from the theory of Pellian equations combiningcombined with certain polynomial identities we dealt with certain equations of the form $z^m=p_i(x)+p_i(y).$ We proved that there are infinitely many solutions $(x,y,z)$ in integers (polynomials). For example, if $m\equiv 1\pmod{2}$ then the Diophantine equation
$$
z^{m}=p_{1}(x)+p_{1}(y)
$$
has a polynomial solution
$$ x=2^{\frac{n-1}{2}}t^{m}-1,\quad y=2^{\frac{n-1}{2}}t^{m},\quad z=2t^2. $$
Other results can be found in the supplementary material at \cite{shrek}.

Now we concentrate on the case $m=3$ with $(n,a_{1},a_{2})=(2,2,2)$, i.e., we consider the Diophantine equation
\begin{equation}\label{eq333}
z^{3}=p_{2}(x)+p_{2}(y).
\end{equation}
From the general observation given on the beginning of this section we have the solution $x=t, y=t(t+1)(t+2)-1$, where $t$ is an integer parameter. We thus are interested in different solutions ($x,y,z)$ of equation (\ref{eq333}), i.e., do not satisfying the relation $y=x(x+1)(x+2)-1$. However, before we present our result we will need a well known property of Pell type equations. More precisely, if $(X,Z)=(X',Z')$ is a particular solution of the Diophantine equation $X^2-AZ^2=B$ and $(X,Z)=(X'',Z'')$ with $Z''\neq0$, is a solution of the equation $X^2-AZ^2=1$, then for each $n\in\N$, the pair $(X,Z)=(X_{n},Z_{n})$, defined recursively by $X_{0}=X', Z_{0}=Z'$ and for $n\geq 1$ by
\begin{equation}\label{solpell}
X_{n}=X''\cdot X_{n-1}+AZ''\cdot Z_{n-1},\quad Z_{n}=Z''\cdot X_{n-1}+X''\cdot Z_{n-1},
\end{equation}
is the solution of the equation $X^2-AZ^2=B$.

\begin{thm}\label{sol333}
	The Diophantine equation {\rm (\ref{eq333})} has infinitely many solutions $(x,y,z)$ in polynomials with integer coefficients and satisfying $\op{deg}_{t}x=\op{deg}_{t}y$.
\end{thm}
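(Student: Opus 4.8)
The plan is to exhibit one explicit infinite family by reducing (\ref{eq333}) to a polynomial Pell equation and then iterating the solution-generating recurrence (\ref{solpell}). First I would pass to symmetric coordinates. Writing $s=x+y$ and $p=xy$, a direct expansion gives the factorization $p_{2}(x)+p_{2}(y)=(s+2)(s^{2}+s-3p)$. Setting $S=x+y+2$ and $D=x-y$, so that $p=(s^{2}-D^{2})/4$ and $s^{2}+4s=S^{2}-4$, this turns (\ref{eq333}) into
\[
4z^{3}=S\bigl(S^{2}+3D^{2}-4\bigr).
\]
Conversely $x=(S-2+D)/2$ and $y=(S-2-D)/2$, so any solution in integer polynomials $S,D$ of equal parity produces a solution of (\ref{eq333}), and the required condition $\op{deg}_{t}x=\op{deg}_{t}y$ holds automatically whenever $\op{deg}_{t}D\ge \op{deg}_{t}S$ (the leading terms of $x$ and $y$ are then $\pm$ the leading term of $D/2$).

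Next I would search within the subfamily $z=qS$ for a polynomial $q$. Substituting and dividing by $S$ gives $4q^{3}S^{2}=S^{2}+3D^{2}-4$, that is $3D^{2}-(4q^{3}-1)S^{2}=4$; multiplying by $3$ and putting $X=3D$, $A=3(4q^{3}-1)$ yields the Pell equation $X^{2}-AS^{2}=12$. The creative step is the choice $q=3t^{2}$, for then
\[
A=3\bigl(4\cdot 27\,t^{6}-1\bigr)=324\,t^{6}-3=(18t^{3})^{2}-3 .
\]
Because $A$ has the shape ``square minus constant,'' it is Pellian: the element $18t^{3}+\sqrt{A}$ has norm $3$, and squaring it and dividing by $3$ produces the fundamental unit $(P,Q)=(216t^{6}-1,\,12t^{3})\in\Z[t]^{2}$ with $P^{2}-AQ^{2}=1$. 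One checks directly that $(X_{0},S_{0})=(36t^{3},2)$ is a base solution of $X^{2}-AS^{2}=12$; it corresponds to the symmetric solution $(x,y,z)=(6t^{3},-6t^{3},6t^{2})$, which serves as the seed.

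I would then feed $A$, the unit $(X'',Z'')=(P,Q)$, and the base point $(X',Z')=(X_{0},S_{0})$ into the recurrence (\ref{solpell}) (with $B=12$), obtaining infinitely many polynomial pairs $(X_{n},S_{n})$, and set $D_{n}=X_{n}/3$, $z_{n}=3t^{2}S_{n}$, $x_{n}=(S_{n}-2+D_{n})/2$, $y_{n}=(S_{n}-2-D_{n})/2$. Three bookkeeping points finish the argument, each by an easy induction on the recurrence: $X_{n}$ stays divisible by $3$ (since $A$ carries the factor $3$, the recurrence shows $D_{n+1}=P D_{n}+(4q^{3}-1)Q S_{n}\in\Z[t]$); reducing modulo $2$ shows $S_{n}$ and $D_{n}$ remain even, so $x_{n},y_{n}\in\Z[t]$; and a degree count gives $\op{deg}_{t}D_{n}=\op{deg}_{t}S_{n}+3$ for all $n$, whence $\op{deg}_{t}x_{n}=\op{deg}_{t}y_{n}=\op{deg}_{t}D_{n}$ strictly increases, yielding infinitely many distinct solutions that are moreover genuinely non-symmetric (one has $x_{n}+y_{n}=S_{n}-2\not\equiv 0$ for $n\ge 1$).

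The main obstacle is the choice of $q$ that makes the reduced Pell equation solvable over $\Z[t]$. For a generic $q$ the equation $X^{2}-AS^{2}=12$ has no nonconstant polynomial solution; in particular if $\op{deg}_{t}q$ is odd then $\op{deg}_{t}A$ is odd and a degree comparison kills every nontrivial unit, so $q$ must have even degree, and beyond that $A$ must be a Pellian polynomial. The family exists precisely because $q=3t^{2}$ forces $A=(18t^{3})^{2}-3$, for which an explicit fundamental unit is available; locating such a $q$ (and arranging the factor $3$ so the unit has integer coefficients) is the heart of the proof, while the integrality congruences and degree estimates are routine verifications.
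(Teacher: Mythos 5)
Your argument is correct, and in substance it is the paper's argument: both proofs factor $p_{2}(x)+p_{2}(y)=(x+y+2)(x^{2}-xy+y^{2}+x+y)$, impose the ansatz $z=3t^{2}(x+y+2)$, arrive at a polynomial Pell equation with the same coefficient $A=3(108t^{6}-1)=(18t^{3})^{2}\cdot 3/3-3$ and the same fundamental unit $(216t^{6}-1,\,12t^{3})$, start from the same seed solution $(x,y,z)=(6t^{3},-6t^{3},6t^{2})$, and iterate via \eqref{solpell}. Where you genuinely diverge is in the linear change of variables used to diagonalize the conic. The paper takes $U=3(108t^{6}-1)(x+1)$ and a $V$ mixing $x$ and $y$ with $t$-dependent coefficients; this produces a Pell equation with a complicated right-hand side, and recovering the integrality of $x_{n},y_{n}$ then requires congruences modulo $6(27t^{6}-1)(108t^{6}-1)$ governed by an auxiliary sequence $A_{n}=14A_{n-1}-A_{n-2}$, a verification the paper explicitly omits as ``tiresome''. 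Your symmetric coordinates $S=x+y+2$, $D=x-y$ give the much cleaner equation $X^{2}-AS^{2}=12$ with seed $(36t^{3},2)$, and integrality of $x_{n},y_{n}$ collapses to two one-line inductions: $3\mid X_{n}$ (because $3\mid A$) and evenness of $S_{n}$ and $D_{n}$. Together with the degree count $\op{deg}D_{n}=\op{deg}S_{n}+3$, which forces $\op{deg}x_{n}=\op{deg}y_{n}=\op{deg}D_{n}$ with strictly increasing degrees, this yields the theorem in full. In short, you construct what appears to be the same infinite family, but your parametrization turns the one step the published proof leaves unverified into a routine check; the only caveat is that the symmetry $y_{n}(t)=x_{n}(-t)$ noted in the paper's subsequent remark is slightly less visible in your coordinates, though it follows from the evident sign symmetry of your recurrence under $t\mapsto -t$.
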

\begin{proof}
	The factorization $p_{2}(x)+p_{2}(y)=(x+y+2)(x^2-xy+y^2+x+y)$ suggests a reasonable assumption that there are solutions of (\ref{eq333}) satisfying the divisibility condition $(x+y+2)|z$. After some numerical experiments we observed that the quotient $z/(3(x+y+2))$ is square of an integer. We thus write $z=3t^2(x+y+2)$, where $t$ is a variable taking integer values. We cancel the common factor $x+y+2$ and are left with the equation of Pell type
	\begin{equation}\label{Pell1}
	U^2-3(108t^6-1)V^2=12(2916t^6-135t^6+1),
	\end{equation} where $$ U=3(108t^6-1)(x+1),\quad V=(54t^6+1)x+2(27t^6-1)y+108t^6-1 $$ or equivalently $$ x=\frac{U}{3(108t^6-1)}-1,\quad y=\frac{3(108t^6-1)V-(54t^6+1)U}{6(27t^6-1)(108t^6-1)}-1. $$ In other words, in order to construct polynomial solutions of equation (\ref{eq333}) it is enough to prove that there are infinitely many polynomial solutions $(U,V)$ of equation (\ref{Pell1}) satisfying the congruence relations \begin{align*}
	U                      &\equiv 0\pmod{3(108t^6-1)},\\
	3(108t^6-1)V-(54t^6+1)U&\equiv 0\pmod{6(27t^6-1)(108t^6-1)}.
	\end{align*}
	
	We observe that equation (\ref{Pell1}) has a solution
	$$
	U'=3(6t^3+1)(108t^6-1),\quad V'=108t^6+18t^3-1.
	$$
	Moreover, the equation
	$$
	U^2-3(108t^6-1)V^2=1
	$$
	has the solution
	$$
	U''=(6t^2-1)(36t^4+6t^2+1),\quad V''=12t^3.
	$$
	Consequently, following the remark given before the statement of our theorem, we see that for each $n\in\N_0$ the pair $(U_{n}, V_{n})$ of polynomials, where $U_{0}=U', V_{0}=V'$ and \begin{align*}
	U_{n}&=(6t^2-1)(36t^4+6t^2+1)U_{n-1}+36(108t^6-1)t^3V_{n-1},\\
	V_{n}&=12t^3U_{n-1}+(6t^2-1)(36t^4+6t^2+1)V_{n-1},
	\end{align*}
	for $n\geq 1$, is the solution of equation (\ref{Pell1}). First of all, we note that the leading coefficients of the polynomials $U_{n}$ and $V_{n}$ are positive. Consequently, by induction on $n$, we easily get the equalities:
	$$
	\op{deg}U_{n}(t)=3(2n+3),\quad \op{deg}V_{n}(t)=6(n+1).
	$$
	Moreover, having the values of degrees of our polynomials we can easily compute the leading coefficients:
	$$
	\op{LC}(U_{n}(t))=\frac{9}{2}432^{n+1},\quad \op{LC}(V_{n}(t))=\frac{1}{4}432^{n+1}.
	$$
	Next, we observe that $U_{0}=3(6t^3+1)(108t^6-1)\equiv 0\pmod{3(108t^6-1)}$ and from the recurrence relation for $U_{n}$ we get that for $n\geq 1$ the following congruence holds
	$$
	U_{n}\equiv U_{n-1}\pmod{3(108t^6-1)}.
	$$
	Thus, by induction on $n$ we immediately get that $U_{n}\equiv 0\pmod{3(108t^6-1)}$ for each $n\in\N$.
	
	The proof that for each $n\in\N_0$ we have $3(108t^6-1)V_{n}-(54t^6+1)U_{n}\equiv 0\pmod{6(27t^6-1)(108t^6-1)}$ is more complicated. First of all we note that \begin{align*} 3(108t^6-1)V_{0}-(54t^6+1)U_{0}&\equiv 6(1 - 6 t^3)(27t^6-1)(108t^6-1)\\
	&\equiv 0\pmod{6(27t^6-1)(108t^6-1)},
	\end{align*} and thus the congruence we are interested in is satisfied for $n=0$. In order to prove that the same is true for $n\in\N$, it is enough to prove the following congruences:
	\begin{align*}
	U_{n}&\equiv (108t^6-1)\left(\frac{3}{4}(7A_{n}-A_{n-1})t^3+A_{n}\right)\pmod{\lambda(t)},\\
	V_{n}&\equiv 9(63A_{n} - 9A_{n - 1} - 72)t^9+108A_{n}t^6-\frac{3}{4}(7A_{n}-A_{n-1}-32)t^3-A_{n}\pmod{\lambda(t)},
	\end{align*}
	where $\lambda(t)=6(27t^6-1)(108t^6-1), A_{0}=1, A_{1}=15$ and for $n\geq 2$ we have $A_{n}=14A_{n-1}-A_{n-2}$. We omit the tiresome proof of the above congruences and the fact that $7A_{n}-A_{n-1}\equiv 0\pmod{4}$ for $n\in\N$. It uses only induction and the recurrence relations satisfied by the sequences $(U_{n})_{n\in\N_0}, (V_{n})_{n\in\N_0}$ and $(A_{n})_{n\in\N_0}$.
	
	Consequently, we get that for each $n\in\N_0$ the polynomials $x_{n}, y_{n}$ defined by
	$$
	x_{n}(t)=\frac{U_{n}(t)}{3(108t^6-1)}-1,\quad y_{n}(t)=\frac{3(108t^6-1)V_{n}(t)-(54t^6+1)U_{n}(t)}{6(27t^6-1)(108t^6-1)}-1,
	$$
	with $z_{n}(t)=3t^2(x_{n}(t)+y_{n}(t)+2)$ are the solutions of the Diophantine equation (\ref{eq333}). Our theorem is proved.
	
	For example, for $n=1, 2$ we get the following polynomial solutions of equation (\ref{eq333}):
	\begin{equation*}
	\begin{array}{lll}
	x_{1}=2(1296t^9+216t^6-9t^3-1),      &  & x_{2}=6t^3(186624 t^{12}+31104 t^9-2160 t^6-216 t^3+5), \\
	y_{1}=-2(1296 t^9-216 t^6-9 t^3+1),  &  & y_{2}=-6t^3(186624 t^{12}-31104 t^9-2160 t^6+216 t^3+5),\\
	z_{1}=6t^2(432 t^6-1)             ,  &  & z_{2}=6t^2(186624 t^{12}-1296 t^6+1)
	
	\end{array}
	\end{equation*}
\end{proof}

\begin{rem}
	{\rm Tracing back our construction of the polynomials $U_{n}, V_{n}$ from the proof of the above theorem one can easily prove
		that
		$$
		\op{deg}x_{n}(t)=\op{deg}y_{n}(t)=3(2n+1)
		$$
		for $n\in\N_0$ and the expressions for the leading coefficients are as follows
		$$
		\op{LC}(x_{n}(t))=-\op{LC}(y_{n}(t))=6\cdot 432^{n}.
		$$
		Moreover, one can prove (a rather unexpected) equality $y_{n}(t)=x_{n}(-t)$ for each $n\in\N$.
	}
\end{rem}

\begin{rem}
	{\rm The family of polynomial solutions of equation (\ref{eq333}) constructed in the proof of Theorem \ref{sol333}, has quite unexpected property: $x_{n}(t)=y_{n}(-t)$ for each $n\in\N_0$. Moreover, there are no $n\in\Z$ such that both $x_{n}(t)$ and $y_{n}(t)$ are positive. Consequently, an interesting question arises whether there are infinitely many solution of equation (\ref{eq333}) satisfying $y>x>0$ and $y\neq x(x+1)(x+2)-1$. In order to find such solutions we performed numerical search and found that in the range $0<x<y<10^5$ we have only 10 solutions satisfying the required conditions. The solutions are the following:
		\begin{align*}
		(x,y)=&(97, 277), (176, 551), (263, 1104), (495, 503), (1244, 2472), (3986, 31706), \\
		&(4505, 12781), (24047, 30599), (26642, 40684), (94743, 96255).
		\end{align*}
	}
\end{rem}

\begin{rem}
	{\rm In the range $1\leq x\leq y\leq 10^5$, equation $z^{2}=p_{2}(x)+p_{2}(y)$ has 619 integer solutions. This relatively large number suggests the existence of a polynomial solution. We tried quite hard to construct parametric solutions but we failed. This motivates us to formulate the following problem.
		
		\begin{ques}
			Does the equation $z^2=p_{2}(x)+p_{2}(y)$ has a solution in polynomials with integer coefficients?
		\end{ques}
	}
\end{rem}

\begin{thm}
Let $i\in\{3,4\}$. The equation $z^2=p_{i}(x)+p_{i}(y)$ has infinitely many solutions in positive integers.
\end{thm}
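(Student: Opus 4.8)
The plan is to treat $i=3$ and $i=4$ separately, exploiting in each case the special algebraic shape of $p_i$, and to reduce the problem to producing infinitely many solutions of a Pell-type equation, very much in the spirit of Theorem \ref{sol333}. The two reductions I would start from are the following. For $i=3$ one has the clean identity $p_3(x)=x(x+1)(x+2)(x+3)=(x^2+3x+1)^2-1$, so that, writing $X=x^2+3x+1$ and $Y=y^2+3y+1$, the equation becomes $z^2=X^2+Y^2-2$, where moreover $4X+5=(2x+3)^2$ and $4Y+5=(2y+3)^2$. For $i=4$ I would use that $p_4(x)=f(x+2)$ with $f(w)=w(w^2-1)(w^2-4)=w^5-5w^3+4w$ an \emph{odd} polynomial; oddness immediately gives the factorization $p_4(x)+p_4(y)=f(w)+f(v)=(w+v)Q(w,v)$ with $w=x+2$, $v=y+2$, and in terms of $p=x+y+4$ and $q=(x+2)(y+2)$ one finds $Q=p^4-5p^2q+5q^2-5p^2+15q+4$.

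For $i=4$ I would then follow the method used in the proof of Theorem \ref{sol333}. Substituting $z=(x+y+4)r$ and cancelling the factor $x+y+4$ leaves $p\,r^2=Q$, a quadratic in $q$ whose discriminant equals $5\bigl(p^4-10p^2+29+4pr^2\bigr)$. Introducing a free parameter $t$, exactly as in the passage from \eqref{eq333} to \eqref{Pell1}, I would turn the requirement that this discriminant be a square into a Pell equation over $\Z[t]$, and then use the recurrence \eqref{solpell} to generate an infinite family of polynomial solutions $(x(t),y(t),z(t))$, specialising to infinitely many positive integers. Here there is genuine room to succeed: the leading-coefficient constraint for a polynomial family is $a^5+b^5=\square$, which (unlike $a^4+b^4=\square$) is solvable, e.g. $a=b=2$ gives $64=8^2$, so no Fermat-type obstruction blocks a polynomial parametrisation.

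For $i=3$ the situation is harder, and I expect this to be the main obstacle. A polynomial family is impossible: if $\deg x=\deg y$ the leading term of $p_3(x)+p_3(y)$ is $(a^4+b^4)t^{4m}$, which cannot be a square by Fermat's theorem that $a^4+b^4=\square$ has no positive solutions, while unequal degrees force $z=X+\mathrm{const}$ and a contradiction. Hence the solutions must be produced by an integer (non-polynomial) Pell construction. The natural attempt is to set $z=X+1$, i.e. to complete $p_3(x)$ to $(x^2+3x+2)^2$; this forces $p_3(y)=2(x+1)(x+2)$, equivalently the Pell equation $u^2-2Y^2=-1$ with $u=2x+3$, whose solutions grow along a linear recurrence. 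The difficulty is that one then needs $Y$ itself to lie in the progression $y^2+3y+1$ (equivalently $4Y+5=\square$), a \emph{second} square condition, and a priori the two conditions are compatible only finitely often.

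The crux of the proof is therefore to choose the one-parameter slice of the quadric $z^2=X^2+Y^2-2$ so cleverly that both shifted-square conditions $4X+5=\square$ and $4Y+5=\square$ are absorbed into a single Pell equation (the analogue of the choice $z=3t^2(x+y+2)$ in Theorem \ref{sol333}), rather than giving two independent constraints. Once such a slice is found, I would verify infinitude and positivity via the governing recurrence, controlling degrees, leading coefficients and signs exactly as in the Remarks following Theorem \ref{sol333}. I expect all the real work to be in this construction of the infinite family and in the verification that its members are positive and distinct from the trivial solutions; the reductions and the Pell bookkeeping are routine by comparison.
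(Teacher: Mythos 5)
Your proposal is a plan rather than a proof, and the step you yourself call ``the crux'' --- finding a one-parameter slice on which the two shifted-square conditions collapse into a single Pell equation --- is exactly the step that is missing. For $i=3$ your concrete attempt $z=X+1$ forces $p_3(y)=2(x+1)(x+2)$, i.e.\ the Pell equation $u^2-2Y^2=-1$ with $u=2x+3$, \emph{together with} the independent requirement that $Y$ be of the form $y^2+3y+1$; you correctly flag that these two conditions need not be compatible infinitely often, and you do not exhibit a slice that works. The paper's key idea, which your $(X,Y,z)$-framework obscures, is to slice in the original variables by an affine substitution $y=ax+b$ chosen so that $p_i(x)+p_i(ax+b)$ acquires a multiple root: one writes down the discriminant locus $\op{Disc}_x\bigl(p_i(x)+p_i(ax+b)\bigr)=0$, a genus-$3$ curve in the $(a,b)$-plane, and searches it for rational points. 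For $i=3$ the point $(a,b)=(1,1)$ yields the identity $p_3(x)+p_3(x+1)=2(x+1)(x+3)(x+2)^2$, so with $z=(x+2)v$ one is left with the single Pell equation $v^2=2(x+2)^2-2$, and $y=x+1$ is automatically a positive integer: both of your square conditions are absorbed at once. Nothing in your proposal locates this (or any other) working slice, so the $i=3$ case is not established.

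For $i=4$ the gap is worse: you plan to produce a \emph{positive} polynomial family via a Pell equation over $\Z[t]$ in the spirit of Theorem \ref{sol333}, but the paper's own numerics (see the remark following the theorem) found no positive solutions at all in the range $0<x\le y\le 10^5$, so the family you hope to construct very likely does not exist; the heuristic that $a^5+b^5=\square$ is solvable leaves ``room to succeed'' is not an argument. What the paper actually proves for $i=4$ comes from the slice $y=-x$ (the point $(a,b)=(-1,0)$ on the discriminant curve), giving $p_4(x)+p_4(-x)=20x^2(x^2+5)$ and a Fibonacci--Lucas Pell equation $v^2-5t^2=4$; the resulting solutions have $x<0$, so even the paper's proof only delivers infinitely many \emph{integer} solutions here (the positivity claimed in the statement is not actually achieved for $i=4$, as the paper's subsequent remark implicitly concedes). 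Your side observation that $a^4+b^4=\square$ obstructs a polynomial family for $i=3$ is correct and consistent with the paper's non-polynomial construction, but overall the reductions you set up, while sound, stop short of the construction that actually yields infinitely many solutions for either value of $i$.
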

\begin{proof} In order to get the result we are looking for rational numbers $a, b$ such that the polynomial $F_{a,b,i}(x):=p_{i}(x)+p_{i}(ax+b)$ has multiple roots. The necessary and sufficient condition for this property is the vanishing of the discriminant of the polynomial $F_{a,b,i}$. We define the curve in the $(a,b)$ plane in the following way:
$$ C_{i}:\;\op{Disc}(F_{a,b,i}(x))=0. $$
Let $i=3$. The genus of the curve $C_{3}$ is equal to 3.
As a consequence of Faltings theorem we get that there are only finitely many required pairs $(a,b)$. Due to the identity $p_{3}(-x-3)=p_{3}(x)$ we can consider the points on $C_{3}$ with $a>0$ only. Using {\sc Magma} procedure {\tt PointSearch} we found that there are only six pairs of the required shape in the range $\op{max}\{H(a),H(b)\}\leq 10^5$, where $H(r)$ is the height of the rational number $r$. There are in total 16 rational points on $C_{3}$ in this range. More precisely, we have $(a,b)\in\cal{A}_{3}$, where
$$
\cal{A}_{3}=\left\{(1,1), (1,-3), (3,-1), (3,7), \left(\frac{1}{3},\frac{1}{3}\right), \left(\frac{1}{3},-\frac{7}{3}\right)\right\}.
$$
One can also observe that the pairs of points $(3,-1), (1/3,1/3)$ and $(3,7), (1/3, -7/3)$ lead to the same quadratic equations. We thus are left with the pairs $(1, 1), (1, -3), (3, -1)$ and $(3, 7)$.

If $(a,b)=(1,1)$, then $p_{3}(x)+p_{3}(x+1)=2(x+1)(x+3)(x+2)^2$. The solutions of the quadratic equation $v^2=2(x+2)^2-2$ are given by
$$
x=\frac{1}{2}((3+2\sqrt{2})^{n}+(3-2\sqrt{2})^n)-2,\quad v=\sqrt{2}((3+2\sqrt{2})^{n}-(3-2\sqrt{2})^n),
$$
and the corresponding value of $z$ is then $z=(x+2)v$.

Using exactly the same methods we cover the cases $(a,b)=(3,-1), (3,7)$. In the former case we deal with the equation $v^2=2(41x^2+30x+1)$, with non-trivial solution $(x,v)=(1,12)$ (the corresponding value of $z$ is $z=xv$). In the latter case we deal with the equation $2(41x^2+216x+280)=z^2$, with non-trivial solution $(x,v)=(4,60)$ (the corresponding value of $z=(x+3)v$. In both cases we get infinitely many positive solutions. We omit the standard details.

If $(a,b)=(1,-3)$ then $p_{3}(x)+p_{3}(x-3)=2(x^2+11)x^2$. However, 2 is a quadratic non-residue of 11 and we get no solutions.

If $i=4$ then
$$
\op{Disc}(F_{a,b,4}(x))=G_{1}(a,b)G_{2}(a,b),
$$
where
$$
G_{1}(a,b)=24u^4 - 100u^3v +105u^2v^2-40uv^3+5v^4, \mbox{where we put}\quad u=a+1, \quad v=b+4.
$$
The polynomial $G_{1}$ is irreducible and the unique solution of the equation $G_{1}(a,b)=0$ is given by $(a,b)=(-1,-4)$. Then $p_{4}(x)+p_{4}(-x-4)=0$ and we get infinitely many integer solutions but with $z=0$.

The second factor is a huge polynomial of degree 12 (with respect to each variable) which defines the curve, say $C_{4}$,
in the $(a,b)$ plane in the following way:
$$
C_{4}:\;G_{2}(a,b)=0.
$$
The genus of the curve $C_{4}$ is 3 (relatively small compared to the degree of the defining polynomial) and thus the set of rational points is finite. We used procedure {\tt PointSearch} one more time and find that the curve $C_{4}$ contains relatively many rational points. Indeed, in the range $\op{max}\{H(a),H(b)\}<10^5$ we found 16 rational points. We have $(a,b)\in\cal{A}_{4}$, where
\begin{eqnarray*}
&&\cal{A}_{4}=\{(-1,-8), (-1,-6), (-1,2), (-1,0), \left(\frac{1}{4},-\frac{15}{4}\right), \left(\frac{1}{4},-\frac{13}{4}\right),  \left(\frac{1}{4},\frac{1}{4}\right), \\
&&\hskip 1cm \left(\frac{1}{4},\frac{3}{4}\right), \left(\frac{2}{3},-\frac{5}{3}\right), \left(\frac{2}{3},\frac{1}{3}\right),  \left(\frac{3}{2},\frac{5}{2}\right), (4,-3), (4,-1), (4,13)\}.
\end{eqnarray*}
If $(a,b)\in\cal{A}_{4}\setminus\{(-1,0), (-1,-2)\}$, then the equation $z^2=p_{4}(x)+p_{4}(ax+b)$ defines a genus 1 curve (and thus there are only finitely many integer solutions in this case) or defines a genus 0 curve with only finitely many integral solutions.

In the first case we have $p_{4}(x)+p_{4}(-x)=20x^2(x^2+5)$, i.e., $5(x^2+5)$ needs to be square. Hence 5 divides $x$. Write $x=5t$. That is we obtain an equation of the form
$$
v^2-5t^2=4.
$$
The solutions of the above equation are well-known: $(v,t)=(L_{2m},F_{2m})$ for some $m\in\N$, where, as usual, $F_{n}$ and $L_{n}$ denotes the $n$-th Fibonacci and Lucas number, respectively, defined recursively $F_{0}=0, F_{1}=1, F_{n}=F_{n-1}+F_{n-2}$ and $L_{0}=2, L_{1}=1, L_{n}=L_{n-1}+L_{n-2}$.
 To obtain integral solution the number $F_{2m}$ has to be even, therefore $m$ is divisible by 3. It follows that if $t=5F_{6n},n\in\N$, then the pair
$$
\left(\frac{-5F_{6n}}{2},\frac{25F_{12n}}{2}\right)
$$
is a solution of the equation $z^2=p_{4}(x)+p_{4}(-x)$.

Using similar approach one can easily check that if $(a,b)=(-1,-2)$, then we get quadratic equation with infinitely many solutions. We omit the details.
\end{proof}	

\begin{rem}
{\rm Let us note the identities
$$
p_{4}(x)+p_{4}(-x-6)=-10(x+2)(x+4)(x+3)^2,\; p_{4}(x)+p_{4}(-x-8)=-20(x^2+8x+21)(x+4)^2,
$$
which can be used to prove that the equation $-z^2=p_{4}(x)+p_{4}(y)$ has infinitely many solutions in integers.
}
\end{rem}

\begin{rem}
	{\rm
		It seems that the question concerning the existence of {\it positive} integer solutions of the equation $z^2=p_{4}(x)+p_{4}(y)$ is more difficult. We performed numerical search for solutions in the range $0<x\leq y\leq 10^5$ and did not find any. This motivated us to formulate the following
		\begin{ques}
			Is the set of solutions in positive integers $x, y, z$ of the equation $z^2=p_{4}(x)+p_{4}(y)$ finite?
		\end{ques}
	}
\end{rem}

\noindent {\bf Acknowledgments}
The authors express their gratitude to the referee for careful reading of the manuscript
and valuable suggestions, which improved the quality of the paper.

This work was partially supported by the European Union and the European Social Fund through project EFOP-3.6.1-16-2016-00022 (Sz.T.). The research was supported in part by grants K115479 and K128088 (Sz.T.) of the Hungarian National Foundation for Scientiﬁc Research. The research is partially supported by the grant of the Polish National Science Centre no. DEC-2017/01/X/ST1/00407 (M.U.).
\bigskip

\bigskip \noindent Szabolcs Tengely, Institute of Mathematics, University of Debrecen, P.O.Box 12, 4010 Debrecen, Hungary. email:\;{\tt tengely@science.unideb.hu}

\bigskip \noindent Maciej Ulas, Jagiellonian University, Faculty of Mathematics and Computer Science, Institute of Mathematics, {\L}ojasiewicza 6, 30 - 348 Krak\'{o}w, Poland. e-mail:\;{\tt maciej.ulas@uj.edu.pl}

\end{document}